\newtheorem{theorem}{Theorem}
\newtheorem{lemma}{Lemma}
\newtheorem{proposition}{Proposition}
\newtheorem{corollary}{Corollary}
\newtheorem{definition}{Definition}
\theoremstyle{remark}
\newtheorem{remark}{Remark}[section]
\numberwithin{equation}{section}
\newcommand{\C}{{\mathbb C}}       
\newcommand{\R}{{\mathbb R}}       
\newcommand{\D}{{\mathbb D}} 
\newcommand{\N}{{\mathbb{N}}}
\newcommand{\Card}{\text{Card}}
\newcommand{\vdm}{\text{VDM}}
\begin{document}

\title[Multidimensional intertwining Leja sequences and applications]
{Multidimensional intertwining Leja sequences
and applications in bidimensional Lagrange interpolation}

\author{Amadeo Irigoyen}


\email{
\begin{minipage}[t]{5cm}
axirigoyen@gmail.com
\end{minipage}
}


\begin{abstract}

We first give a method to get multidimensional Leja sequences
by considering intertwining sequences from one-dimensional ones. An application is the 
existence of explicit Leja sequences for the
closed unit polydisc.

Next, we deal with some applications in bidimensional Lagrange interpolation
with intertwining Leja sequences.
These results also require an explicit formula for the associated fundamental Lagrange polynomials
with uniform estimates.

\end{abstract}

\maketitle

\tableofcontents

\section{Introduction}

\subsection{Some reminders on the one-dimensional case}

In this paper we deal with multidimensional Leja sequences and some 
estimates for bidimensional Lagrange interpolation.
We remind the reader the expression of the
fundamental Lagrange interpolation polynomial ({\em FLIP}) for the one-dimensional case by
\begin{eqnarray}\label{lagsec}
l_k^{(N+1)}(z) & = &
\prod_{j=0,j\neq k}^{N}
\frac{z-\eta_j}{\eta_k-\eta_j}
\,,\qquad z\in\C,
\quad k=0,\ldots,N
\,,
\end{eqnarray}
where $N\geq0$ and $\eta_0,\ldots,\eta_N$ are all different complex numbers.

The problem of finding {\em good} sets $\left\{\eta_k\right\}_{k\geq0}$ for
Lagrange interpolation
(i.e. for which we can have some control of the associated FLIPs)
is a domain of big interest. 
One of them is called \textit{Fekete set}~(see~\cite{fekete1}):
an $N$-{\em Fekete set} for the compact subset
$K\subset\C$ is a set of $N$ elements
$\zeta_0,\ldots,\zeta_{N-1}\in K$ which
maximize (in modulus) the Vandermonde determinant, i.e.
\begin{eqnarray}\label{defvdmfekete}
\;\;
& &
\left|
\vdm\left(\zeta_0,\ldots,\zeta_{N-1}\right)
\right|
=
\sup_{z_0,\ldots,z_{N-1}\in K}
\left|
\vdm\left(z_0,\ldots,z_{N-1}\right)
\right|
=
\sup_{z_0,\ldots,z_{N-1}\in K}
\prod_{1\leq i<j\leq N-1}\left|z_j-z_i\right|
.
\end{eqnarray}
It is known
that the FLIPs associated with
any $N$-Fekete set are always
bounded by $1$: this can be shown by noticing that
every FLIP can be written for all $z\in\C$ as
\begin{eqnarray}\label{defflip1}
l_{k}^{(N)}(z)
& = &
\frac{
\vdm\left(\zeta_0,\ldots,\zeta_{k-1},\,z\,,\zeta_{k+1},\ldots,\zeta_{N-1}\right)
}{
\vdm\left(\zeta_0,\ldots,\zeta_{N-1}\right)
}
\end{eqnarray}
(where we have replaced $\zeta_k$ with $z$).
It follows by~(\ref{defvdmfekete}) that
$\sup_{z\in K}
\left|
l_{k}^{(N)}(z)
\right|
\;=\;
1$
for all
$k=0,\ldots,N-1$.
Fekete sets are essentially the best ones for Lagrange interpolation
and uniform stability of the associated FLIPs.
Nevertheless, constructing them is generally a hard task.
Therefore, a natural question is if there exist {\em simpler} sets
with (almost) the same property. 
This leads to the following definition.

\begin{definition}\label{defleja}

A {\em Leja sequence $\mathcal{L}$} for a compact set $K\subset\C$ is a sequence
$\left(\eta_0,\eta_1,\ldots,\eta_k,\ldots\right)$ 
that satisfies the following properties: 
$\eta_0
\in
\partial K$
and for all
$k\geq1$,
\begin{eqnarray}\label{defleja1}
\sup_{z\in K}
\left[
\prod_{i=0}^{k-1}
\left|
z-\eta_i
\right|
\right]
& = &
\prod_{i=0}^{k-1}
\left|
\eta_k-\eta_i
\right|
\,.
\end{eqnarray}

For all $N\geq1$, the
{\em $N$-Leja section $\mathcal{L}_N$} of a Leja sequence
$\mathcal{L}$ is the finite sequence given by the first
$N$ points of $\mathcal{L}$.

\end{definition}

These sequences took their name from F. Leja (see~\cite{leja1}) but they were first
considered by A.~Edrei (see \cite{edrei1}, p. 78).
They are not necessarily unique (as the Fekete sets). On the other hand,
by the maximum principle,
all the $\eta_i$'s lie on the boundary $\partial K$.
Moreover, determining
Leja sequences is a $1$-dimensional optimization problem and is inductive 
(unlike any $N$-Fekete set
requires a new research of an $N$-tuple
$\left(\zeta_0,\ldots,\zeta_{N-1}\right)$ for
every $N\geq1$).

In the special case when
$K=\overline{\D}=
\left\{z\in\C,\,|z|\leq1\right\}$
is the closed unit disk, all the Leja sequences are explicit.
One can find in~\cite{bialascalvi} their complete description with proof: if we fix
$\eta_0=1$, we have for all $k\geq0$,
\begin{eqnarray}\label{explicitleja}
\eta_k
\;=\;
\exp
\left(
i\pi\sum_{l=0}^sj_l2^{-l}
\right)
& \mbox{ where } &
k
\;=\;
\sum_{l=0}^sj_l2^l
\,,\;
j_l\in\{0,1\}
\,.
\end{eqnarray}

We recently know that  any $N$-Leja section for the disk
has essentially the same property as any $N$-Fekete set in the meaning that
all the associated FLIPS are uniformly bounded with respect to
$N\geq1$ and $k=0,\ldots,N-1$ (see~\cite[Theorem~1.2]{irigoyen5}).
An immediate consequence is an estimate as 
$O(N)$ 
of the Lebesgue constant 
$\Lambda_{N}\left(\overline{\D}\right)$, $N\geq1$,  of any $N$-Leja section
for the unit disk. 
We remind the reader that the Lebesgue constant is defined for $N\geq1$ by
\begin{eqnarray}\label{defLebcte1}
\Lambda_{N}\left(\overline{\D}\right)
& = &
\sup_{z\in\overline{\D}}
\left(
\sum_{k=0}^{N-1}
\left|
l_k^{(N)}(z)
\right|
\right)
\,.
\end{eqnarray}
J.-P. Calvi and V. M. Phung had conjectured in~\cite{calviphung1} that
$\Lambda_{N}\left(\overline{\D}\right)\leq N$. This conjecture has been confirmed by
M.~Ouna\" ies~(see~\cite{myriam}).

\subsection{Construction of explicit multidimensional Leja sequences}\label{applmultivarintro}

First, we need to define a numeration in $\N^s$ where $s\geq2$. Let be $k\in\N^s$,
i.e. $k=(k_1,\ldots,k_s)$. We set
$|k|=k_1+\cdots+k_s$.\\
For all $k,\,l\in\N^s$, we say that $k\leq l$ iff
\begin{eqnarray}\label{lexicorder}
|k|\;<\;|l|
& \text{or} &
\left\{\begin{array}{l}
|k|\;=\;|l|\\
\text{and}\\ 
k\,\leq\,l
\text{ in the \textit{lexicographic order}\,.}
\end{array}\right.
\end{eqnarray}
We remind the reader that the lexicographic order is defined as follows:
$k< l$ iff $k_1<l_1$ or
$k_1=l_1,\,\ldots,k_t=l_t$ for $t\in[\![1,s-1]\!]$ and $k_{t+1}<l_{t+1}$.
This allows us to define a numeration on $\N^s$: 
\begin{eqnarray}\label{numNs}
\left\{\begin{array}{ccl}
\N^* & \to & \N^s\\
n & \mapsto & k(n)\,=\,\left(k_1(n),\ldots,k_s(n)\right)
\end{array}\right.
& &
\qquad\qquad\qquad\qquad\qquad\qquad\qquad\qquad\qquad
\end{eqnarray}
For example we find for $s=3$:\\
$k(1)=(0,0,0),\,
k(2)=(0,0,1),\,
k(3)=(0,1,0),\,
k(4)=(1,0,0),\,
k(5)=(0,0,2),
\,k(6)=(0,1,1),
\ldots$

Next, let consider the complexe space $\C^s$ with $s\geq2$. Given $s$ complex sequences
$\left(\eta^{(j)}_i\right)_{i\geq0}\subset\C$, $j=1,\ldots,s$, 
we want to define their \textit{intertwining sequence} as a sequence of $\C^s$
(see~\cite{calvi2005}).
On the other hand, we also need to define a numeration on the monomials
$z_1^{k_1}\cdots z_s^{k_s}\in\C\left[z_1,\ldots,z_s\right]=:\C[z]$.

\begin{definition}

The \textit{intertwining sequence} of the sequences 
$\left(\eta^{(j)}_i\right)_{i\geq0}\subset\C$, $j=1,\ldots,s$, 
is the sequence $\eta_{k(n)}\subset\C^s$ defined as:
\begin{eqnarray}\label{numseq}
n\in\N^*\;\mapsto\;
\eta_{k(n)}\,:=\,
\left(\eta^{(1)}_{k_1(n)},\ldots,\eta^{(s)}_{k_s(n)}\right)\,,
\end{eqnarray}
where $k(n)=\left(k_1(n),\ldots,k_s(n)\right)$
is the numeration defined by~(\ref{numNs}).\\
Similarly, we consider the sequence of monomials defined as:\\
\begin{eqnarray}\label{numpol}
n\in\N^*\;\mapsto\;
e_n(z)\,=\,z^{k(n)}\,:=\,
z_1^{k_1(n)}\cdots z_s^{k_s(n)}\,.
\end{eqnarray}

\end{definition}

Since for all $k\in\N^s$, 
$\eta_k=\left(\eta^{(1)}_{k_1},\ldots,\eta^{(s)}_{k_s}\right)$
(resp., $z^k=z_1^{k_1}\cdots z_s^{k_s}$) is reached by a~(unique) integer
$n\in\N^*$, this allows us to consider for all $N\geq1$ the set
$\Omega_N\subset\C^s$ and the space $\mathcal{P}_N\subset\C[z]$:
\begin{eqnarray}\label{defPN}
\Omega_N
\;:=\;
\left\{
\eta_{k(n)}\,,\;n=1,\ldots,N
\right\}
& \text{ and } &
\mathcal{P}_N
\;:=\;
\text{span}
\left\{
e_n\,,\;n=1,\ldots,N
\right\}
\,.
\end{eqnarray}
We always have 
$\dim\mathcal{P}_N=N$.
On the other hand, if all the sequences
$\left(\eta^{(j)}_i\right)_{i\geq0}$, $j=1,\ldots,s$, 
are of pairwise distinct elements (and it will be always the case
in the whole paper), we have
$\Card\left(\Omega_N\right)=N$.
In particular, if we set for all $d\geq0$,
\begin{eqnarray}\label{defNd}
N_d & = &
\frac{(d+1)(d+2)\cdots(d+s)}{s!}
\;=\;
\dbinom{s+d}{s}
\,,
\end{eqnarray}
then 
\begin{eqnarray}\label{defOmegaNPNple}
\Omega_{N_d}\;=\;\left\{\eta_k\,,\;|k|\leq d\right\}\,,
& \text{ and } &
\mathcal{P}_{N_d}\;=\;\C_d[z_1,\ldots,z_s]\;=:\;\C_d[z]
\end{eqnarray}
the space of polynomials whose total degree is at most $d$.

On the other hand,
definition~(\ref{numpol}) allows us to define the generalized
Vandermonde determinant of any $N$-tuple 
$(H_1,\ldots,H_N)\in\left(\C^s\right)^N$:
\begin{eqnarray}\label{defMultiVDM}
\vdm\left(H_1,\ldots,H_N\right)
& = &
\det\left[e_i\left(H_j\right)\right]_{1\leq i, j\leq N}
\end{eqnarray}
(we indeed get back the usual Vandermonde determinant for $s=1$
since~(\ref{numNs}) and~(\ref{numpol}) yield: $\forall\,n\geq1$,
$e_n(z_1)=z_1^{n-1}$).
Since the sequences $\left(\eta^{(j)}_i\right)_{i\geq0}$
are of pairwise distinct elements, we have:
\begin{eqnarray*}
\forall\,N\geq1\,,\qquad
\vdm\left(\Omega_N\right)
\;:=\;
\det\left[e_{k(i)}\left(\eta_{k(j)}\right)\right]_{1\leq i,j\leq N}
\;\neq\;0
\,.
\end{eqnarray*}
This means that the set $\Omega_N$ is \textit{unisolvent} for the space $\mathcal{P}_N$
and an equivalent condition is the following one:
for all $P\in\mathcal{P}_N$, if $P(z)=0$ for all
$z\in\Omega_N$, then $P\equiv0$. 
This extends in the special case of $\Omega_N\subset\C^s$ the classical idea of 
Biermann in~\cite{bierman1903}
(see~\cite{calvi2005} as well for a general version with
{\em block unisolvent arrays}).
We can also give a direct proof of this result 
(Section~\ref{applmultivar}, Remark~\ref{OmegaNunisolvent}).

A consequence is the following equivalent property:
for every function $f$ defined on
$\Omega_N$, there exists a unique polynomial $P\in\mathcal{P}_N$
such that $P(z)=f(z)$
for all $z\in\Omega_N$. 
This element $P$ is the
\textit{Lagrange interpolation polynomial $L_{\Omega_N}[f]$ of $f$ with respect to $\Omega_N$}
(see~(\ref{defmultilagpol}) below).
In addition, $L_{\Omega_N}[f]$ can be given as follows:
since $\vdm\left(\Omega_N\right)\neq0$, 
we can write in this fashion
the (generalized) fundamental Lagrange polynomials (FLIPs)
in the form~(\ref{defflip1})
(where $H_n=\eta_{k(n)}$, $n=1,\ldots,N$), i.e.
\begin{eqnarray}\label{defmultiflip}
\qquad
& &
l_{H_n}^{(N)}(z)
\;=\;
\dfrac{\vdm\left(H_1,H_2,\ldots,H_{n-1},\;z\;,H_{n+1},\ldots,H_N\right)
}{
\vdm\left(\Omega_N\right)}
\,,\quad
\forall\,z\in\C^s
\end{eqnarray}
thus
\begin{eqnarray}\label{defmultilagpol}
L_{\Omega_N}[f](z)
& = &
\sum_{n=1}^N
f\left(H_n\right)
\,
l_{H_n}^{(N)}(z)
\,.
\end{eqnarray}
As it has been pointed out (e.g., see~\cite[p. 54]{bloomboscalvilev2012}), there is no cancellation 
in~(\ref{defmultiflip})
so that the formulas cannot be simplified as in~(\ref{lagsec}). 
Nevertheless, Proposition~\ref{explicitlagrangepol}
in Section~\ref{appl2var} gives for the special bidimensional case 
an explicit formula of $l_{H_n}^{(N)}$, $\forall\,n=1,\ldots,N$.

Now definition~(\ref{defMultiVDM}) allows us to extend from
the one-dimensional case~(\ref{defleja1}) 
the generalized definition of 
a (multidimensional) Leja sequence in $\C^s$.
By analogy, given a compact set $K\subset\C^s$, we can choose $H_1\in\partial K$
and, if we have constructed
$H_1,\ldots,H_{N}$, we choose for $H_{N+1}$ any point (not necessarily unique) that satisfies
\begin{eqnarray*}
\left|
\vdm\left(H_1,H_2,\ldots,H_N,H_{N+1}\right)
\right|
& = &
\sup_{z\in K}
\left|
\vdm\left(H_1,H_2,\ldots,H_N,z\right)
\right|
\,.
\end{eqnarray*}
As for the Fekete sets, it is hard to construct examples of explicit 
multidimensional Leja sequences (even in one dimension, except for special compact sets
as the disk).
Here we present a way to get explicit multidimensional Leja sequences for the product of compact sets
(provided we already know their partial one-dimensional Leja sequences).

\begin{theorem}\label{explicitlejasequ}

For all $j=1,\ldots,s$, let $K_j\subset\C$ be a compact set and
$\left(\eta^{(j)}_i\right)_{i\geq0}\subset K_j$ any sequence
(with $\eta^{(j)}_0\in\partial K_j$),
and let $\left(H_n\right)_{n\geq1}=\left(\eta_{k(n)}\right)_{n\geq1}$
be their intertwining sequence. TFAE:
\begin{enumerate}
\item[$(i)$]
$\left(H_n\right)_{n\geq1}$ is a Leja sequence for $K_1\times\cdots\times K_s$;

\item[$(ii)$]
$\forall\,j=1,\ldots,s$, $\left(\eta^{(j)}_i\right)_{i\geq0}$
is a Leja sequence for $K_j$.

\end{enumerate}

\end{theorem}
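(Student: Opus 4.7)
The plan is to reduce the theorem to an explicit factorization of the generalized Vandermonde
$V_{n-1}(z) := \vdm(H_1,\ldots,H_{n-1},z)$, viewed as a polynomial in $z=(z_1,\ldots,z_s)\in\C^s$. Precisely, I claim that for every $n\geq1$,
\[
V_{n-1}(z)\;=\;C_n\,\prod_{j=1}^{s}\prod_{i=0}^{k_j(n)-1}\bigl(z_j-\eta^{(j)}_i\bigr),
\]
for a nonzero constant $C_n$ independent of $z$. Since both the compact $K=K_1\times\cdots\times K_s$ and this polynomial factor coordinate by coordinate, the Leja maximization at step $n$ then reduces to $s$ separate one-dimensional Leja conditions.

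To prove the factorization, I first observe that the index set $\Lambda_n:=\{k(1),\ldots,k(n)\}$ is a lower set of $\N^s$ for the componentwise order. This is because the numeration defined by~(\ref{lexicorder}) is a linear extension of that order: if $\alpha\leq\beta$ componentwise with $\alpha\neq\beta$, then $|\alpha|<|\beta|$, so $\alpha$ precedes $\beta$. Setting $P(z):=\prod_{j=1}^{s}\prod_{i=0}^{k_j(n)-1}(z_j-\eta^{(j)}_i)$, the monomials of $P$ are supported on the box $\{\alpha\in\N^s:\alpha\leq k(n)\}$, which by the lower-set property lies inside $\Lambda_n$; hence $P\in\mathcal{P}_n$. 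Moreover, for $m<n$ one has $k(m)\not\geq k(n)$ componentwise (else $|k(m)|\geq|k(n)|$, and equality would force $k(m)=k(n)$, hence $m=n$), so some coordinate $k_j(m)$ sits in $\{0,\ldots,k_j(n)-1\}$ and kills the corresponding factor of $P(H_m)$. Thus $P$ is a nonzero element of $\mathcal{P}_n$ vanishing on $\{H_1,\ldots,H_{n-1}\}$; unisolvence of $\Omega_n$ for $\mathcal{P}_n$ makes this vanishing subspace one-dimensional, and $V_{n-1}$ lies in it, giving $V_{n-1}=C_nP$ with $C_n\neq0$.

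Granted the factorization, the implication $(ii)\Rightarrow(i)$ is immediate: the supremum on $K$ distributes across coordinates, each one-variable sup is attained at $\eta^{(j)}_{k_j(n)}$ by the Leja property of the $j$-th sequence, and $H_1\in\partial K$ since each $\eta^{(j)}_0\in\partial K_j$. For $(i)\Rightarrow(ii)$, I fix $j^*\in\{1,\ldots,s\}$ and $m\geq 1$ and pick the unique $n$ with $k(n)$ equal to $m$ in position $j^*$ and zero elsewhere; the factorization then collapses to $V_{n-1}(z)=C_n\prod_{i=0}^{m-1}(z_{j^*}-\eta^{(j^*)}_i)$, so the Leja condition for $(H_n)$ at step $n$ is exactly the Leja condition for $(\eta^{(j^*)}_i)_i$ at step $m$. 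Combined with the standing hypothesis $\eta^{(j^*)}_0\in\partial K_{j^*}$, this yields $(ii)$.

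The main obstacle is the factorization step, and it concentrates in the two combinatorial facts about the numeration~(\ref{numNs}): the lower-set property of $\Lambda_n$ and the strict incomparability $k(m)\not\geq k(n)$ for $m<n$. These are the point where the specific choice of ordering (total degree, then lexicographic) is essential: without a linear extension of componentwise order, $P$ would fail to lie in $\mathcal{P}_n$ and the proposed factorization would break. Once the factorization is in hand, the remaining dimension count uses only unisolvence, and the derivation of both implications is a direct one-variable-at-a-time argument.
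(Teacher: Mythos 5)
Your proof is correct, and its architecture coincides with the paper's: both rest on the factorization $\vdm\left(H_1,\ldots,H_{n-1},z\right)=C_n\prod_{j=1}^{s}\prod_{i=0}^{k_j(n)-1}\bigl(z_j-\eta^{(j)}_i\bigr)$, which is precisely the content of the paper's Proposition~\ref{explicitVDM} --- your single formula, indexed by $k(n)$ for the incoming point, unifies the paper's two cases~(\ref{explicitVDMi}) and~(\ref{explicitVDMii}), which are indexed by $k(N)$ for the last accepted point --- and once that is available the two implications are obtained exactly as you do, by splitting the supremum over $K_1\times\cdots\times K_s$ coordinate by coordinate and, for $(i)\Rightarrow(ii)$, by specializing $k(n)$ to $m$ times a coordinate vector. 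The genuine difference is in how the factorization is proved. The paper row-reduces the bordered Vandermonde against the Lagrange interpolant $L_{\Omega_N}\left[e_{N+1}\right]$, obtains $\vdm\left(H_1,\ldots,H_N,z\right)=\left[e_{N+1}(z)-L_{\Omega_N}\left[e_{N+1}\right](z)\right]\vdm\left(H_1,\ldots,H_N\right)$, and then identifies $e_{N+1}-L_{\Omega_N}\left[e_{N+1}\right]$ with $P_N$ through an explicit computation of $k(N+1)$ from $k(N)$ and a multi-case verification that $P_N$ vanishes on $\Omega_N$. You replace this by three clean observations: the graded lexicographic enumeration is a linear extension of the componentwise order, so $\{k(1),\ldots,k(n)\}$ is a lower set and $P\in\mathcal{P}_n$; $k(m)\not\geq k(n)$ for $m<n$, so $P$ vanishes at $H_1,\ldots,H_{n-1}$; and unisolvence makes the space of such polynomials one-dimensional. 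This yields a shorter, case-free derivation of the same key lemma. Two small points you leave implicit but should state: $V_{n-1}\in\mathcal{P}_n$ (expand the determinant along its last column), and $C_n=\vdm\left(H_1,\ldots,H_{n-1}\right)\neq0$ (compare coefficients of $e_n$, using that $P$ is monic in $e_n$). Like the paper, you take unisolvence of $\Omega_n$ for $\mathcal{P}_n$ as a known input, which the paper justifies separately (Remark~\ref{OmegaNunisolvent}), so there is no circularity.
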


First, a natural question is if all Leja sequence can be written as an intertwining one from
one-dimensional Leja sequences.
The answer is negative and we give 
a counterexample in 
Subsection~\ref{Lejanonintertwining}~(Proposition~~\ref{ctrexLejanonint}).

Next, as an immediate application of the above theorem, 
we can give some explicit multidimensional Leja sequences for $\overline{\D}^s$
(as well as for any closed polydisc of $\C^s$ by dilatation and translation).

\begin{corollary}\label{explicitbileja}

For all $j=1,\ldots,s$, let $\left(\eta^{(j)}_i\right)_{i\geq0}$ be
defined as in~(\ref{explicitleja}). Then their intertwining sequence is a
Leja sequence for the unit polydisc $\overline{\D}^s$.

\end{corollary}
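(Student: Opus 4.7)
The plan is to apply Theorem \ref{explicitlejasequ} directly to the product $K_1\times\cdots\times K_s=\overline{\D}^s$ with $K_j=\overline{\D}$ for every $j=1,\ldots,s$. The structure of the argument reduces entirely to verifying the two one-dimensional hypotheses of the theorem, so the proof is expected to be essentially a single line.

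First I would check the boundary condition: taking $k=0$ in formula (\ref{explicitleja}) gives $\eta^{(j)}_0=\exp(0)=1$, which lies on $\partial\overline{\D}$ as required by Theorem \ref{explicitlejasequ}. Next I would invoke the description of Leja sequences for the closed unit disk from \cite{bialascalvi}, recalled in the paragraph surrounding (\ref{explicitleja}): the sequences given by that formula are exactly the Leja sequences for $\overline{\D}$ normalized so that $\eta^{(j)}_0=1$. Consequently each partial sequence $\left(\eta^{(j)}_i\right)_{i\geq0}$ is a one-dimensional Leja sequence for $\overline{\D}$, i.e. condition $(ii)$ of Theorem \ref{explicitlejasequ} is satisfied.

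Finally, the implication $(ii)\Rightarrow(i)$ of Theorem \ref{explicitlejasequ} yields that the intertwining sequence $\left(H_n\right)_{n\geq1}=\left(\eta_{k(n)}\right)_{n\geq1}$ is a Leja sequence for $\overline{\D}^s$, which is the conclusion of Corollary \ref{explicitbileja}. There is no substantive obstacle here since Theorem \ref{explicitlejasequ} does all the real work and the description from \cite{bialascalvi} is already on record; the only bookkeeping is to observe that each starting point is on the boundary of the corresponding factor, which is immediate from $|1|=1$. By an elementary affine change of variables, the same construction produces explicit Leja sequences for every closed polydisc in $\C^s$, as announced in the statement.
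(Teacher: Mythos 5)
Your proposal is correct and coincides with the paper's intended argument: the corollary is stated there as an immediate application of Theorem~\ref{explicitlejasequ}, using exactly the facts you cite, namely that the sequences of~(\ref{explicitleja}) are the Leja sequences for $\overline{\D}$ described in~\cite{bialascalvi} with $\eta^{(j)}_0=1\in\partial\overline{\D}$, so the implication $(ii)\Rightarrow(i)$ gives the conclusion. No gap.
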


Theorem~\ref{explicitlejasequ} is an application of Proposition~\ref{explicitVDM}
(Section~\ref{applmultivar}) that gives an inductive formula for the Vandermonde
determinant. Another application is the formula of M.~Schiffer and J.~Siciak
for the two-variable Vandermonde determinant
(see~\cite{schiffersiciak1962}, or formula~(4.8.2) from~\cite{bloomboschrislev1992}).

\begin{corollary}\label{redemschifsic}

For all $d\geq0$,
\begin{eqnarray*}
\vdm
\left(
\left(\eta_0,\theta_0\right),\ldots,\left(\eta_{d-1},\theta_0\right),
\left(\eta_0,\theta_d\right),\ldots,\left(\eta_d,\theta_0\right)
\right)
\,=\,
\prod_{j=1}^d
\left[
\vdm\left(\eta_0,\ldots,\eta_j\right)
\times
\vdm\left(\theta_0,\ldots,\theta_j\right)
\right]
.
\end{eqnarray*}

\end{corollary}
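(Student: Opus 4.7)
The plan is to proceed by induction on $d$, the inductive step being driven by Proposition~\ref{explicitVDM}. The base case $d=0$ is immediate since both sides reduce to $1$.

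For the inductive step, observe that the graded-lex numeration~(\ref{numNs}) decomposes $\Omega_{N_d}=\Omega_{N_{d-1}}\cup A_d$, where $A_d=\{(\eta_b,\theta_{d-b}):b=0,\ldots,d\}$ consists of the $d+1$ antidiagonal points added as the total degree climbs from $d-1$ to $d$. Correspondingly $\mathcal{P}_{N_d}=\mathcal{P}_{N_{d-1}}\oplus\text{span}\{z_1^a z_2^{d-a}:a=0,\ldots,d\}$. Applied in this setting, Proposition~\ref{explicitVDM} expresses $\vdm(\Omega_{N_d})$ as $\vdm(\Omega_{N_{d-1}})$ times the determinant of a $(d+1)\times(d+1)$ block whose rows correspond to the new monomials and whose columns correspond to the new points in $A_d$.

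To compute this block I would use the following row operations: within $\mathcal{P}_{N_d}$, each new monomial $z_1^a z_2^{d-a}$ may be replaced (modulo $\mathcal{P}_{N_{d-1}}$) by the polynomial
$$p_a(z_1,z_2)\;=\;\prod_{i=0}^{a-1}(z_1-\eta_i)\,\prod_{j=0}^{d-a-1}(z_2-\theta_j),$$
which has the same top-degree part $z_1^a z_2^{d-a}$ but differs only by terms of total degree $<d$. Evaluating $p_a$ at a new point $(\eta_b,\theta_{d-b})$ yields zero unless $b=a$: indeed $b<a$ kills the first factor, while $b>a$ kills the second. Hence the reduced block is diagonal, with diagonal entries $\prod_{i<a}(\eta_a-\eta_i)\prod_{j<d-a}(\theta_{d-a}-\theta_j)$. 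Taking the product over $a=0,\ldots,d$ and reindexing $c=d-a$ in the second factor recovers exactly $\vdm(\eta_0,\ldots,\eta_d)\cdot\vdm(\theta_0,\ldots,\theta_d)$; combined with the inductive hypothesis, this finishes the proof.

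The main obstacle is verifying that these row operations are indeed permitted by Proposition~\ref{explicitVDM}'s factorization — namely, that the block contributed by the new monomials may be freely modified by multiples of the earlier rows without affecting the overall determinant. Once this compatibility is secured, the diagonal structure of the polynomials $p_a$ renders the remainder of the computation automatic.
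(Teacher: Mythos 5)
Your argument is correct, but the engine of your inductive step is genuinely different from the paper's. The paper also inducts on $d$ and also works antidiagonal by antidiagonal, but it adjoins the $d+1$ new points $\left(\eta_0,\theta_{d+1}\right),\ldots,\left(\eta_{d+1},\theta_0\right)$ \emph{one at a time}: each single addition is literally an instance of cases $(i)$ and $(ii)$ of Proposition~\ref{explicitVDM} with $s=2$, producing a factor $\prod_{i=0}^{k}\left(\eta_{k+1}-\eta_i\right)\prod_{j=0}^{l-2}\left(\theta_{l-1}-\theta_j\right)$ at each step, and the telescoping product of these factors is then reindexed into $\vdm\left(\eta_0,\ldots,\eta_{d+1}\right)\vdm\left(\theta_0,\ldots,\theta_{d+1}\right)$. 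You instead adjoin the whole antidiagonal in one stroke via a block triangularization of the Vandermonde matrix. The ``main obstacle'' you flag at the end is in fact no obstacle, and Proposition~\ref{explicitVDM} is not needed for it: since $p_a-z_1^az_2^{d-a}\in\C_{d-1}[z]=\mathcal{P}_{N_{d-1}}$ is a linear combination of the earlier monomials, replacing the row of $z_1^az_2^{d-a}$ by the row of $p_a$ is an elementary row operation (adding multiples of earlier rows) and cannot change the determinant; and since $p_a\left(\eta_k,\theta_l\right)=0$ whenever $k+l\leq d-1$ (if both $k\geq a$ and $l\geq d-a$ held, then $k+l\geq d$), the block of new rows evaluated at old points vanishes, so the matrix is genuinely block triangular with the diagonal block you compute. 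In summary, your route is self-contained --- it uses only elementary determinant manipulations plus the vanishing of the ``Newton-type'' products $p_a$ on lower antidiagonals --- whereas the paper's route recycles the one-point recursion already established for Theorem~\ref{explicitlejasequ}; the price of your version is that the block step must be justified from scratch (as above), while the price of the paper's is the bookkeeping needed to reorganize the telescoping product.
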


\subsection{Some estimates in bidimensional Lagrange interpolation}\label{appl2varintro}

In this part we deal with the special case of $\C^2$.
We first remind the reader the lexicographic order~(\ref{lexicorder})
and~(\ref{numNs}) for $\N^2$ (i.e.
$n\mapsto\left(k(n),l(n)\right)$)
that yields numerations~(\ref{numseq}) and~(\ref{numpol})
(where $\left(\eta_i\right)_{i\geq0}$ and $\left(\theta_j\right)_{j\geq0}$ are two 
sequences of pairwise distinct elements):
\begin{eqnarray}\label{defintertwin}
\left\{H_n\;=\;\left(\eta_{k(n)},\theta_{l(n)}\right)\,,\;n\geq1\right\}
& \text{ and } &
\left\{e_n(z,w)\;=\;z^{k(n)}w^{l(n)}\,,\;n\geq1\right\}
\,.
\end{eqnarray}
We also remind the reader the space
$\C_d[z,w]$ of complex polynomials of total degree at most $d$, and
\begin{eqnarray}\label{Nn}
N_d
\;=\;
\dim\C_d[z,w]
\;=\;
\dfrac{(d+1)(d+2)}{2}
\end{eqnarray}
(with the convention that $N_{-1}=0$ and $\C_{-1}[z,w]=\{0\}$).
Since we deal with the bidimensional case, we can 
consider for all $N\geq1$ the unique integers $d\geq0$ and $m$ with $0\leq m\leq d$, such that
\begin{eqnarray}\label{defnm}
N_{d-1}
\;<\;
N
\;\leq\;
N_d
& \mbox{ and } &
m
\;=\;
N-N_{d-1}-1
\,.
\end{eqnarray}
We also remind the reader the intertwining section
\begin{eqnarray}\label{defOmegaN2}
\quad
& &
\Omega_N
\;:=\;
\left\{H_n\,,\;1\leq n\leq N\right\}
\;=\;
\left\{
\left(\eta_0,\theta_0\right),\left(\eta_0,\theta_1\right),\left(\eta_1,\theta_0\right),
\ldots,
\left(\eta_0,\theta_d\right),
\ldots,\left(\eta_m,\theta_{d-m}\right)
\right\}
\end{eqnarray}
and the space
\begin{eqnarray}\label{defPN2}
\mathcal{P}_N
& = &
\text{span}\left\{e_n\,,\;1\leq n\leq N\right\}
\,.
\end{eqnarray}
One has $\C_{d-1}[z,w]\subset\mathcal{P}_N\subset\C_{d}[z,w]$, 
and $\mathcal{P}_N$ is spanned by $\C_{d-1}[z,w]$
and the monomials $w^d,\,zw^{d-1},\ldots,z^{m}w^{d-m}$.
In particular, $\mathcal{P}_{N_d}=\C_d[z,w]$.
On the other hand, we have
by~(\ref{defnm}): $\dim\mathcal{P}_N=N_{d-1}+m+1=N$.\\
Thus we can remind the reader 
the fundamental Lagrange polynomials (FLIPs) that are well-defined
since $\Omega_N$ is unisolvent for $\mathcal{P}_N$: 
if $H_n=\left(\eta_{k(n)},\theta_{l(n)}\right)$, $1\leq n\leq N$, then
\begin{eqnarray*}
\qquad
& &
l_{H_n}^{(N)}(z,w)
\;=\;
\dfrac{\vdm\left(H_1,H_2,\ldots,H_{n-1},\,(z,w)\,,H_{n+1},\ldots,H_N\right)
}{
\vdm\left(\Omega_N\right)}
\,,
\quad
\forall\,(z,w)\in\C^2
\,.
\end{eqnarray*}
The bidimensional Lagrange polynomial of any function
$f$ defined on $\Omega_N$ is given by:
\begin{eqnarray}\label{deflagpold}
L_{\Omega_N}[f](z,w)
& = &
\sum_{n=1}^N
f\left(H_n\right)
\,
l_{H_n}^{(N)}(z,w)
\,.
\end{eqnarray}
Finally, the Lebesgue constant of the set
$\Omega_N=\left\{H_1,\ldots,H_N\right\}$ (with respect to a given compact set
$K\supset\Omega_N$) is defined similarly as in the one-dimensional 
case~(\ref{defLebcte1}), i.e.:
\begin{eqnarray*}
\Lambda_N\left(K,\Omega_N\right)
& = &
\sup_{(z,w)\in K}
\left[
\sum_{n=1}^N
\left|
l_{H_n}^{(N)}(z,w)
\right|
\right]
\,.
\end{eqnarray*}
We then have the following result.

\begin{theorem}\label{lebcteintertwin}

Let $\left(\eta_i\right)_{i\geq0}$ and $\left(\theta_j\right)_{j\geq0}$
be Leja sequences for the unit disk (with $\left|\eta_0\right|=\left|\theta_0\right|=1$),
and let us consider the intertwining sequence $\left(H_n\right)_{n\geq1}$ defined
as in~(\ref{defintertwin}). We have for all $N\geq1$:
\begin{eqnarray}\label{lebcteintertwinbidisc}
\Lambda_N\left(\overline{\D}^2,\Omega_N\right)
& = &
O\left(N^{3/2}\right)
\,.
\end{eqnarray}

If $K_1$ (resp., $K_2$) is a compact set whose boundary is an Alper-smooth Jordan curve,
$\Phi_1$ (resp., $\Phi_2$) its associated conformal mapping, let us consider
the sequence $\left(\Phi\left(H_n\right)\right)_ {n\geq1}$ defined by the intertwining sequence of
$\left(\Phi_1\left(\eta_i\right)\right)_{i\geq0}$
and
$\left(\Phi_2\left(\theta_j\right)\right)_{j\geq0}$.
Then
\begin{eqnarray}\label{lebcteintertwincompact}
\Lambda_N\left(K_1\times K_2,\Phi\left(\Omega_N\right)\right)
& = &
O\left(N^{2A/\ln(2)+3/2}\right)
\,,
\end{eqnarray}
where $A$ is a positive constant depending only on $K_1\times K_2$.

\end{theorem}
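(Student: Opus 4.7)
\textbf{Proof plan for Theorem \ref{lebcteintertwin}.}
The strategy is to combine the explicit formula for the bidimensional fundamental Lagrange polynomials announced as Proposition~\ref{explicitlagrangepol} with the uniform boundedness of one-dimensional Leja FLIPs on the unit disk from \cite[Theorem 1.2]{irigoyen5}. First, I would use Proposition~\ref{explicitlagrangepol} to rewrite each $l_{H_n}^{(N)}(z,w)$ as a structured expression involving one-dimensional Lagrange-type factors associated with the partial sequences $(\eta_i)$ and $(\theta_j)$, together with ``edge'' corrections that encode the lexicographic truncation at the integer $N$ via the parameters $d$ and $m$ from~(\ref{defnm}). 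Writing $H_n=(\eta_{k(n)},\theta_{l(n)})$ with $k(n)+l(n)\le d$ and $d=O(\sqrt N)$, this reduction turns a bidimensional problem into a comparison with one-dimensional Leja FLIPs in each variable.

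Second, using that the one-dimensional Leja FLIPs for the unit disk are uniformly bounded by an absolute constant, I would derive a pointwise estimate of the shape
\begin{equation*}
\sup_{(z,w)\in\overline{\D}^2}\bigl|l_{H_n}^{(N)}(z,w)\bigr|\;\le\;C\sqrt{d}\;=\;O\bigl(N^{1/4}\bigr),
\end{equation*}
where the $\sqrt{d}$ loss reflects the presence of up to $d+1$ correction factors produced by the explicit formula along the top diagonal $|k|=d$. Summing the $N$ FLIP bounds then yields $\Lambda_N(\overline{\D}^2,\Omega_N)\le C' N\sqrt{d}=O(N^{3/2})$, which is~(\ref{lebcteintertwinbidisc}). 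For~(\ref{lebcteintertwincompact}), I would transport everything through the conformal maps $\Phi_1,\Phi_2$: the intertwining structure is preserved, so the explicit formula for the FLIPs of $\Phi(\Omega_N)$ is formally identical. The Alper-smoothness of $\partial K_j$ then allows one to replace the absolute bound on each one-dimensional factor with the standard growth $O(n^{A/\ln 2})$ known for Leja FLIPs on Alper-smooth Jordan curves; since each bidimensional FLIP decomposes into one-dimensional contributions in each of the two variables, one picks up two such factors, producing the extra $N^{2A/\ln 2}$ multiplier on top of the $N^{3/2}$ already obtained.

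The main obstacle is the pointwise control of $l_{H_n}^{(N)}$ for indices $n$ with $N_{d-1}<n\le N$. For such $n$ the space $\mathcal{P}_N$ is only $\C_{d-1}[z,w]$ augmented by the $m+1$ monomials $w^d,zw^{d-1},\ldots,z^m w^{d-m}$, so the grid is not a full tensor product and one cannot simply write $l_{H_n}^{(N)}$ as a product of two one-dimensional FLIPs. Keeping track of the error terms produced by this asymmetric truncation in Proposition~\ref{explicitlagrangepol}, and verifying that their cumulative effect is no worse than the $\sqrt{d}$ factor claimed above, is where I expect the technical effort to concentrate; everything else should then follow from routine summation and from the cited one-dimensional estimates.
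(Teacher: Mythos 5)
Your proposal follows essentially the same route as the paper: use Proposition~\ref{explicitlagrangepol} to write each bidimensional FLIP as a sum of $O(d-p-q+1)$ products of one-dimensional Leja FLIPs, bound each one-dimensional factor by \cite[Theorem~1.2]{irigoyen5} on $\overline{\D}$ (respectively by \cite[Theorem~1.3]{irigoyen5} on the Alper-smooth compacts, which contributes the factor $N^{2A/\ln 2}$), and then sum over the $N$ interpolation points; this is exactly the content of Proposition~\ref{unifestimintertwining} followed by the crude summation the paper performs. The one slip is your intermediate bound $C\sqrt{d}=O(N^{1/4})$: since the explicit formula produces up to $d+1$ correction terms each bounded by an absolute constant, the correct per-FLIP estimate is $O(d)=O(N^{1/2})$ (as in Proposition~\ref{unifestimintertwining}), not $O(\sqrt{d})$, and it is this bound that makes your own summation $N\times O(N^{1/2})=O(N^{3/2})$ consistent, whereas $N\sqrt{d}$ as written would give $O(N^{5/4})$.
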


As a consequence, this result gives a partial answer to question~(6) in~\cite{bloomboscalvilev2012}.

We remind the reader that a compact
set whose boundary is an Alper-smooth Jordan curve, is
a special class of compact sets: for example, twice continuously 
differentiable Jordan curves are Alper-smooth. 
Here $\Phi_1$ and $\Phi_2$ denote the respective conformal mappings from
$\overline{\C}\setminus\D$ onto $\overline{\C}\setminus K_1$ and $\overline{\C}\setminus K_2$.
This theorem is an immediate application of the
following result that is a uniform estimate
for the bidimensional FLIPs associated with any intertwining Leja
sequence,
and that is an application of Proposition~\ref{explicitlagrangepol} (Section~\ref{appl2var})
that gives explicit formulas for $l_{H_n}^{(N)}$, $\forall\,n=1,\ldots,N$.

\begin{proposition}\label{unifestimintertwining}

Under the hypotheses of the first part of Theorem~\ref{lebcteintertwin}, one has for all $N\geq1$
and all $p,\,q\geq0$ such that
$\left(\eta_p,\theta_q\right)\in\Omega_{N}$
(we have $p+q\leq d$ by~(\ref{defnm})),
\begin{eqnarray*}
\sup_{(z,w)\in\overline{\D}^2}
\left|
l_{(\eta_p,\theta_q)}^{(N)}(z,w)
\right|
\;\leq\;
2\left(d-p-q+1\right)
\pi^2\exp(6\pi)
\;=\;
O\left(N^{1/2}\right)
\,.
\end{eqnarray*}

Similarly, under the hypotheses of the second part of Theorem~\ref{lebcteintertwin}, one has for all $N\geq1$
and all $p,\,q\geq0$ such that
$\left(\eta_p,\theta_q\right)\in\Omega_{N}$,
\begin{eqnarray*}
\sup_{(z,w)\in K_1\times K_2}
\left|
l_{(\Phi_1(\eta_p),\Phi_2(\theta_q))}^{(N)}(z,w)
\right|
\;\leq\;
M(d+4)^{4A/\ln(2)}
\left(d-p-q+1\right)
\;=\;
O\left(N^{2A/\ln(2)+1/2}\right)
,
\end{eqnarray*}
where $M$ and $A$ are positive constants depending only on $K_1\times K_2$.

\end{proposition}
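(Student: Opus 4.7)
The plan is to read off both estimates from the explicit formula for the bidimensional FLIPs provided by Proposition~\ref{explicitlagrangepol} and then reduce to the one-dimensional uniform Leja bound for $\overline{\D}$ recalled in the introduction (\cite[Theorem~1.2]{irigoyen5}).

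First, I would expand $l_{(\eta_p,\theta_q)}^{(N)}(z,w)$ by Proposition~\ref{explicitlagrangepol}. I expect this formula to express the bidimensional FLIP as a sum of at most $2(d-p-q+1)$ summands, each being a tensor product of a one-dimensional FLIP in the variable $z$ attached to some Leja section of $(\eta_i)_{i\geq 0}$ and a one-dimensional FLIP in the variable $w$ attached to some Leja section of $(\theta_j)_{j\geq 0}$. This decomposition reflects the staircase structure of $\Omega_N$: the quantity $d-p-q$ is the vertical gap between the node $(\eta_p,\theta_q)$ and the top row of the triangle, and it governs how many ``rows'' contribute corrections. Then \cite[Theorem~1.2]{irigoyen5} bounds each one-dimensional factor by an absolute constant of size at most $\pi\exp(3\pi)$ on $\overline{\D}$; multiplying factor-wise in $z$ and $w$ gives $\pi^{2}\exp(6\pi)$ per summand, and the triangle inequality over the summands yields the first claimed bound. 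The asymptotic $O(N^{1/2})$ then follows from $N\leq N_d$ which forces $d+1=O(N^{1/2})$.

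For the second part, I would transport everything to $\overline{\D}^{2}$ via the conformal mappings $\Phi_1$ and $\Phi_2$. Because $\partial K_j$ is Alper-smooth, the Suetin--Andrievskii theory supplies a two-sided comparison between polynomial norms on $K_j$ and on $\overline{\D}$ pulled back by $\Phi_j$, with a multiplicative loss of order $M(d+1)^{2A/\ln 2}$ for polynomials of degree at most $d$. Applied to the numerator of $l_{(\Phi_1(\eta_p),\Phi_2(\theta_q))}^{(N)}$ (a polynomial of total degree $d$ in $(z,w)$) and then once more to control the one-dimensional factors appearing after pull-back to $\overline{\D}$, the two transfers combine to give the exponent $4A/\ln(2)$, which is twice the one-variable Alper exponent because both coordinates need to be transferred.

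The principal obstacle is the structural step: one has to verify that the sum produced by Proposition~\ref{explicitlagrangepol} is really a short sum of genuine tensor products of one-dimensional Leja FLIPs, rather than Lagrange polynomials on unrelated nodes, so that \cite[Theorem~1.2]{irigoyen5} applies directly to each factor and the counting $\leq 2(d-p-q+1)$ is tight. Once this structural identification is in hand, the rest reduces to the triangle inequality and the standard Alper-smooth conformal-transfer estimates; no sharper optimisation is needed to reach the stated exponents.
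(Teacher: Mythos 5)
Your treatment of the first estimate is essentially the paper's own proof: Proposition~\ref{explicitlagrangepol} writes $l_{(\eta_p,\theta_q)}^{(N)}$ as a sum of at most $2(d-p-q+1)$ products of one-dimensional FLIPs of Leja sections of $(\eta_i)_{i\geq0}$ and $(\theta_j)_{j\geq0}$, each factor is bounded by $\pi\exp(3\pi)$ via Theorem~\ref{unifestim} (\cite[Theorem~1.2]{irigoyen5}), and the triangle inequality together with $N\sim N_d\sim d^2/2$ finishes. The ``structural identification'' you flag as the principal obstacle is exactly what Proposition~\ref{explicitlagrangepol} delivers, so there is no gap there. (A minor slip: the bound $d=O(N^{1/2})$ comes from $N_{d-1}<N$, not from $N\leq N_d$, which gives the opposite inequality.)

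The second estimate is where your route diverges and where there is a genuine problem. You propose to transport the bidimensional FLIP to $\overline{\D}^2$ and apply a Suetin--Andrievskii polynomial-norm comparison to its numerator, ``and then once more'' after pull-back. But Lagrange interpolation does not commute with the non-polynomial maps $\Phi_j$: the FLIP attached to the nodes $\Phi\left(\Omega_N\right)$ on $K_1\times K_2$ is not the image under any simple transformation of the FLIP attached to $\Omega_N$ on $\overline{\D}^2$, and a two-sided norm comparison applied to the numerator of a Vandermonde ratio says nothing about the ratio, whose denominator also changes. The paper's argument sidesteps this entirely: Proposition~\ref{explicitlagrangepol} only requires the node sequences to consist of pairwise distinct points, so it applies verbatim to $\left(\Phi_1(\eta_i)\right)_{i\geq0}$ and $\left(\Phi_2(\theta_j)\right)_{j\geq0}$ and expresses $l^{(N)}_{(\Phi_1(\eta_p),\Phi_2(\theta_q))}$ as the same short sum of tensor products of one-dimensional FLIPs, now at the transformed nodes. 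Each such factor is then bounded directly by the one-dimensional result for images of Leja points under conformal maps of Alper-smooth domains, Theorem~\ref{unifestimcompact} (\cite[Theorem~1.3]{irigoyen5}), giving $M_1(k+2)^{2A_1/\ln(2)}$ in $z$ and $M_2(l+2)^{2A_2/\ln(2)}$ in $w$; the exponent $4A/\ln(2)$ then arises from maximizing $(k+2)(l+2)$ over $k+l\leq d$, which yields $\left(\tfrac{d+4}{2}\right)^2$. So your factor-by-factor reduction and the ``one exponent $2A/\ln(2)$ per coordinate'' intuition are correct, but they must be justified by the universality of the explicit formula for arbitrary distinct nodes, not by a conformal transfer of the two-variable FLIP, which as stated does not go through.
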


Although there are results which give polynomial estimates for $\C^s$
in~\cite{calviphung1}, Theorem~\ref{lebcteintertwin} yields
here precise exponents for $N$. In particular, it gives examples of
sequences that satisfy\\
$\lim_{d\rightarrow+\infty}\left[\Lambda_{N_d}\left(K,\Omega_{N_d}\right)\right]^{1/d}=1$,
where $K$ is any compact set mentioned in
Theorem~\ref{lebcteintertwin}.
In addition, there is an 
improvement of some results from~\cite{calviphung1} for $s=2$: on the one hand, it is proved
in~\cite[p.~621]{calviphung1} that
$\Lambda_{N_d}\left(\overline{\D}^2,\Omega_N\right)
=
O\left(
d^{(2^2+7\times2-6)/2}(\ln d)^2
\right)
=
O\left(d^{6}(\ln d)^2\right)$;
on the other hand, 
an application of~(\ref{lebcteintertwinbidisc}) with $N=N_d=O\left(d^2\right)$ (by~(\ref{Nn})), gives
$\Lambda_{N_d}\left(\overline{\D}^2,\Omega_N\right)
=
O\left(N_d^{3/2}\right)
=
O\left(d^3\right)$.

Since any compact set described in 
Theorem~\ref{lebcteintertwin} 
is nonpluripolar, polynomially convex
and $L$-regular, inequality~(\ref{lebcteintertwincompact}) has well known 
consequences~(see~\cite[Theorem~4.1]{bloomboschrislev1992}):

\begin{itemize}

\item

$\lim_{d\rightarrow+\infty}\left|\vdm\left(H_1,H_2,\ldots,H_{N_d}\right)\right|^{1/l_d}=D(K)$,
where $l_d=\sum_ {j=1}^dj\left(N_j-N_{j-1}\right)$ and 
$D(K)$ is the transfinite diameter of $K$ (see~\cite{zaharjuta1975} for its general definition 
and existence);

\item

by~\cite{bermanboucksomwitt2011}, the empirical measures satisfy
$\lim_{d\rightarrow+\infty}\dfrac{1}{N_d}\sum_{j=1}^{N_d}\delta_{H_j}=c\mu_K$
weak-*,
where $\delta_{H_j}$ is the unit Dirac measure on $H_j$, $c$ is some universal constant and
$\mu_K$ is the equilibrium measure of $K$ (for the definition and existence of $\mu_K$,
see~\cite[Chapter~1]{safftotik} where $K\subset\C$, and~\cite[Chapter~5]{klimek} where $K\subset\C^d$);

\item

the Lagrange polynomial $L_{\Omega_{N_d}}[f]$ converges to $f$ as $d\rightarrow+\infty$, 
uniformly on $K$ and for each $f$ holomorphic on a
neighborhood of $K$.

\end{itemize}

Another consequence of the above theorem is a usual application of an estimate of the
Lebesgue constant
to approximation properties of Lagrange polynomial interpolation.
Theorem~7.2
from~\cite{renwang} provides a generalized version of Jackson's Theorem
(see~\cite{jackson}).
Given $m\in\N$ and $\gamma$ with $0<\gamma\leq1$, we consider the space
$Lip_{\gamma}^m\left(\overline{\D}^2\right)$ of the functions 
$f\in\mathcal {O}\left(\D^2\right)\bigcap C\left(\overline{\D}^2\right)$
which satisfy for all $|\alpha|=\alpha_1+\alpha_2\leq m$,
$\left(
\dfrac{\partial^{\alpha}f}{\partial z^{\alpha}}
\right)
\left(e^{ih}\zeta\right)
-
\left(
\dfrac{\partial^{\alpha}f}{\partial z^{\alpha}}
\right)
(\zeta)
=
O\left(|h|^{\gamma}\right)$,
for all
$\zeta\in\mathbb{T}^2$ (the unit torus in $\C^2$) and $h\in\R$.
We then have the following result.

\begin{corollary}\label{appljackson}

Let be $m\geq3$, $\gamma$ with $0<\gamma\leq1$, and
the intertwining sequence $\left(H_n\right)_{n\geq1}$
of any two Leja sequences for the unit disk.
Then for all $f\in Lip_{\gamma}^m\left(\overline{\D}^2\right)$,
\begin{eqnarray*}
\sup_{(z,w)\in\overline{\D}^2}
\left|f(z,w)-L_{\Omega_N}[f](z,w)\right|
& = &
O\left(\frac{1}{N^{(m+\gamma-3)/2}}\right)
\,,
\end{eqnarray*}
where $L_{\Omega_N}[f]$ is the bidimensional Lagrange polynomial of $f$ defined in~(\ref{deflagpold}).

\end{corollary}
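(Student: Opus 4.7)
The plan is to follow the classical route: reduce the interpolation error to the best polynomial approximation error via the Lebesgue constant bound of Theorem~\ref{lebcteintertwin}, and then apply the bidimensional Jackson-type estimate of~\cite[Theorem~7.2]{renwang} to the latter.

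First, since the Lagrange interpolation operator $L_{\Omega_N}$ reproduces every element of $\mathcal{P}_N\supset\C_{d-1}[z,w]$, for every $P\in\C_{d-1}[z,w]$ one has
\begin{eqnarray*}
\sup_{(z,w)\in\overline{\D}^2}\left|f(z,w)-L_{\Omega_N}[f](z,w)\right|
& \leq &
\left(1+\Lambda_N\left(\overline{\D}^2,\Omega_N\right)\right)
\sup_{(z,w)\in\overline{\D}^2}\left|(f-P)(z,w)\right|.
\end{eqnarray*}
Taking the infimum over $P\in\C_{d-1}[z,w]$ yields
$\sup_{\overline{\D}^2}\left|f-L_{\Omega_N}[f]\right|
\leq
\left(1+\Lambda_N\left(\overline{\D}^2,\Omega_N\right)\right)E_{d-1}(f)$,
where $E_{d-1}(f)$ denotes the error of best uniform approximation of $f$ on $\overline{\D}^2$ by polynomials of total degree at most $d-1$.

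Next, for $f\in Lip_{\gamma}^m\left(\overline{\D}^2\right)$, the cited Jackson-type theorem of Ren and Wang gives $E_{d-1}(f)=O\left(d^{-(m+\gamma)}\right)$. Combining this with the first part of Theorem~\ref{lebcteintertwin}, namely
$\Lambda_N\left(\overline{\D}^2,\Omega_N\right)=O\left(N^{3/2}\right)$,
and with the relation $N_{d-1}<N\leq N_d$ from~(\ref{defnm}) together with~(\ref{Nn}) (which force $N=O\left(d^2\right)$ and thus $d=O\left(N^{1/2}\right)$), we obtain
\begin{eqnarray*}
\sup_{(z,w)\in\overline{\D}^2}\left|f(z,w)-L_{\Omega_N}[f](z,w)\right|
& = &
O\left(N^{3/2}\right)\cdot O\left(d^{-(m+\gamma)}\right)
\;=\;
O\left(d^{3-(m+\gamma)}\right)
\;=\;
O\left(N^{-(m+\gamma-3)/2}\right).
\end{eqnarray*}
The hypothesis $m\geq3$ is precisely what guarantees $m+\gamma-3>0$, so that this bound actually tends to zero.

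No genuinely new obstacle arises here: the substantive work lies in Theorem~\ref{lebcteintertwin}. The only point requiring care is to verify that the cited Jackson-type result applies in the form needed, i.e. that $Lip_{\gamma}^m$-smoothness on $\overline{\D}^2$ (expressed through Hölder regularity of derivatives on the distinguished boundary $\mathbb{T}^2$) produces an approximation of order $d^{-(m+\gamma)}$ by polynomials of total degree $\leq d-1$ in the supremum norm on the full closed polydisc; this is exactly the content of~\cite[Theorem~7.2]{renwang}, and the rest of the argument is the short computation above.
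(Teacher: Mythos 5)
Your proposal is correct and follows essentially the same route as the paper: bound the interpolation error by $\left(1+\Lambda_N\right)$ times the best approximation error using the reproduction property $L_{\Omega_N}[P]=P$ for $P\in\mathcal{P}_N$, then invoke the Ren--Wang Jackson-type theorem and the estimate $\Lambda_N=O\left(N^{3/2}\right)$ with $d=O\left(N^{1/2}\right)$. The only (immaterial) difference is that you approximate by polynomials of degree $d-1$ under the convention $N_{d-1}<N\leq N_d$, whereas the paper shifts the index and uses degree $d$; the asymptotics are unaffected.
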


\section*{Acknowledgments}

I would like to thank J. Ortega-Cerd\`a for having introduced me this nice problem and
for all our interesting discussions about this work.


\section{On the construction of multidimensional Leja sequences}\label{applmultivar}

In all the following, we will take the convention that any empty sum will be
$0$. Similarly, any empty product will be $1$.

\subsection{A preliminary formula for the Vandermonde determinant of intertwining sequences}

We first deal  with an important result that is a formula
for the Vandermonde determinant for any intertwining sequence. It 
can also be linked to Proposition~2.5 from~\cite{calvi2005}.

\begin{proposition}\label{explicitVDM}

Let $\left(H_n \right)_{n\geq1}$ be the intertwining sequence of any 
sequences $\left(\eta^{(1)}_{i_1}\right)_{i_1\geq0},\ldots$ and $\left(\eta^{(s)}_{i_s}\right)_{i_s\geq0}$
(where for all $j=1,\ldots,s$, $\left(\eta^{(j)}_i\right)_{i\geq0}$ is a sequence
of pairwise distinct elements).
We have for all $N\geq1$ and $z=(z_1,\ldots,z_s)\in\C^s$:
\begin{eqnarray*}
\vdm
\left(
H_1,H_2,\ldots,H_{N},z
\right)
& = &
P_N(z)
\times
\vdm
\left(
H_1,H_2,\ldots,H_{N}
\right)
\,,
\end{eqnarray*}
where $P_N\in\C[z]=\C[z_1,\ldots,z_s]$.

In addition, let denote $H_N=\left(\eta^{(1)}_{k_1},\ldots,\eta^{(s)}_{k_s}\right)$
where $k(N)=\left(k_1,\ldots,k_s\right)$:
\begin{enumerate}
\item[$(i)$] 
if $N=N_d=\dbinom{d+s}{s}$ with $d\geq0$ (see~(\ref{defNd})), then 
$k(N)=\left(d,0,\ldots,0\right)$ and 
\begin{eqnarray}\label{explicitVDMi}
P_N(z) & = &
\prod_{i_s=0}^d\left(z_s-\eta^{(s)}_{i_s}\right)
\,;
\end{eqnarray}

\item[$(ii)$] otherwise $N_d<N<N_{d+1}$ and
$k(N)=\left(k_1,\ldots,k_m,0,\ldots,0\right)$ where 
$2\leq m\leq s$ and $k_m\geq1$, then 
\begin{eqnarray}\label{explicitVDMii}
P_N(z) & = &
\prod_{j=1}^{m-2}
\left[
\prod_{i_j=0}^{k_j-1}\left(z_j-\eta^{(j)}_{i_j}\right)
\right]
\times
\prod_{i_{m-1}=0}^{k_{m-1}}\left(z_{m-1}-\eta^{(m-1)}_{i_{m-1}}\right)
\times
\prod_{i_s=0}^{k_m-2}\left(z_s-\eta^{(s)}_{i_s}\right)
\,.
\end{eqnarray}

\end{enumerate}

\end{proposition}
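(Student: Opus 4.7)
My plan is to combine a unisolvence argument with a direct identification of the explicit polynomial. First I would expand the determinant $F_N(z):=\vdm(H_1,\ldots,H_N,z)$ along its last column. Since the entries of that column are $e_1(z),\ldots,e_{N+1}(z)$, the cofactor expansion shows at once that $F_N$ is a polynomial belonging to $\mathcal{P}_{N+1}$ as a function of $z$, that $F_N(H_j)=0$ for every $j=1,\ldots,N$ because two columns then coincide, and that the coefficient of $e_{N+1}(z)=z^{k(N+1)}$ equals $\vdm(H_1,\ldots,H_N)$, namely the $(N+1,N+1)$-cofactor.

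Next I would invoke unisolvence of $\Omega_{N+1}$ for $\mathcal{P}_{N+1}$ (Remark~\ref{OmegaNunisolvent}): the evaluation map $\mathcal{P}_{N+1}\to\C^{N+1}$ at the points of $\Omega_{N+1}$ is bijective, so the subspace of $\mathcal{P}_{N+1}$ consisting of polynomials vanishing on $\Omega_N$ has dimension one. Any element of that subspace is therefore uniquely determined by its coefficient on $e_{N+1}$, and the proposition reduces to checking that the explicit polynomial $P_N$ of~\eqref{explicitVDMi}--\eqref{explicitVDMii} belongs to $\mathcal{P}_{N+1}$, vanishes on $\Omega_N$, and has coefficient $1$ on $e_{N+1}$.

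The three checks are combinatorial rather than computational. A short calculation with the order~\eqref{lexicorder} identifies the lex-successor of $k(N)$ at total degree $d+1$: in case (i) it is $(0,\ldots,0,d+1)$, and in case (ii) it is $(k_1,\ldots,k_{m-2},k_{m-1}+1,0,\ldots,0,k_m-1)$. Multiplying the leading monomials of the product blocks of $P_N$ then produces precisely $z^{k(N+1)}$ with coefficient $1$, while every other monomial in the expansion of $P_N$ has total degree at most $d$ and therefore automatically lies in $\mathcal{P}_{N_d}\subset\mathcal{P}_{N+1}$. For vanishing on $\Omega_N$ I would argue by contradiction: if some $\eta_{k(n)}$ with $n\le N$ annihilated no factor of $P_N$, then the bounds $k_j(n)\ge k_j$ for $j\le m-2$, $k_{m-1}(n)\ge k_{m-1}+1$, and $k_s(n)\ge k_m-1$ would force $|k(n)|\ge d+1$, with equality compelling $k(n)=k(N+1)>k(N)$, contradicting $n\le N$. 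The only real obstacle in implementing this plan is organizational, namely keeping the two cases and the lex-order bookkeeping straight when verifying that each monomial produced by expanding $P_N$ corresponds to an admissible index not exceeding $k(N+1)$.
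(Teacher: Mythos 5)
Your argument is correct, and it rests on the same three combinatorial checks as the paper (the leading monomial of $P_N$ is exactly $e_{N+1}=z^{k(N+1)}$ with coefficient $1$, every other monomial in its expansion has total degree at most $d$, and $P_N$ vanishes on $\Omega_N$), but the way you extract the factorization from the determinant is different. The paper replaces the last row by itself minus $\sum_{n=1}^N\alpha_nL_n$, where $\sum_n\alpha_ne_n=L_{\Omega_N}[e_{N+1}]$ is the Lagrange interpolant of $e_{N+1}$ on $\Omega_N$; this annihilates the first $N$ entries of that row and yields directly $\vdm(H_1,\ldots,H_N,z)=\bigl(e_{N+1}(z)-L_{\Omega_N}[e_{N+1}](z)\bigr)\vdm(H_1,\ldots,H_N)$, after which the explicit product is identified with $e_{N+1}-L_{\Omega_N}[e_{N+1}]$ by unisolvence of $\Omega_N$ for $\mathcal{P}_N$. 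You instead expand along the last column and locate $\vdm(H_1,\ldots,H_N,\cdot)$ in the one-dimensional subspace of $\mathcal{P}_{N+1}$ vanishing on $\Omega_N$. Two caveats. First, your route invokes unisolvence of $\Omega_{N+1}$ for $\mathcal{P}_{N+1}$, whereas the paper only needs unisolvence of $\Omega_N$ for $\mathcal{P}_N$; this matters for Remark~\ref{OmegaNunisolvent}, which derives unisolvence of $\Omega_{N+1}$ \emph{from} the level-$N$ factorization --- with your ordering that bootstrap becomes circular, so you must take unisolvence as an external fact (Biermann/Calvi, as the paper allows). Second, ``uniquely determined by its coefficient on $e_{N+1}$'' tacitly assumes the generator of the one-dimensional subspace has nonzero $e_{N+1}$-coefficient; the cleanest repair is to note that your explicit $P_N$ is itself a nonzero element of that subspace with $e_{N+1}$-coefficient $1$, hence spans it, so $\vdm(H_1,\ldots,H_N,\cdot)=c\,P_N$ and comparing $e_{N+1}$-coefficients gives $c=\vdm(H_1,\ldots,H_N)$. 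Your vanishing argument is a slightly slicker packaging of the paper's case split $|l|\leq d$ versus $|l|=d+1$: the forced inequality $|k(n)|\geq d+1$, with equality compelling $k(n)=k(N+1)$, is impossible for $n\leq N$ because the numeration~(\ref{numNs}) is increasing.
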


\begin{proof}

We have for all $N\geq1$ and all $z\in\C^s$ (the $e_n$'s being defined by~(\ref{numpol})),
\begin{eqnarray*}
\vdm
\left(
H_1,H_2,\ldots,H_{N},z
\right)
\,=\,
\left|
\begin{array}{ccccc}
e_1\left(H_1\right) & e_1\left(H_2\right) & \cdots & e_1\left(H_N\right) & e_{1}(z)
\\
e_2\left(H_1\right) & e_2\left(H_2\right) & \cdots & e_2\left(H_N\right) & e_{2}(z)
\\
\vdots & \vdots & \ddots & \vdots & \vdots
\\
e_N\left(H_1\right) & e_N\left(H_2\right) & \cdots & e_N\left(H_N\right) & e_{N}(z)
\\
e_{N+1}\left(H_1\right) & e_{N+1}\left(H_2\right) & \cdots & e_{N+1}\left(H_N\right) & e_{{N+1}}(z)
\end{array}
\right|
.
\end{eqnarray*}
Let us consider the Lagrange polynomial of $e_{N+1}$ defined by
$L_{\Omega_N}\left[e_{N+1}\right]=
\sum_{n=1}^Ne_{N+1}\left(H_n\right)l_{H_n}^{(N)}$ 
where $\Omega_N=\left\{H_1,\ldots,H_N\right\}$,
and whose
existence is guaranteed because $\Omega_N$ is unisolvent.
Since $L_{\Omega_N}\left[e_{N+1}\right]$ is spanned by the
$e_n$'s, $n=1,\ldots,N$, one also has
\begin{eqnarray}\label{explicitVDMaux1}
L_{\Omega_N}\left[e_{N+1}\right](z)
& = &
\sum_{n=1}^N\alpha_ne_n(z)
\,,\qquad
\alpha_n\in\C,\,\;\forall\,n=1,\ldots,N.
\end{eqnarray}
The above determinant is not changed if we replace the last row $L_{N+1}$
with $L_{N+1}-\sum_{n=1}^N\alpha_nL_n$. It follows by~(\ref{explicitVDMaux1})
that $e_{N+1}\left(H_j\right)$ becomes for
all $j=1,\ldots,N$, 
\begin{eqnarray*}
e_{N+1}\left(H_j\right)-\sum_{n=1}^N\alpha_ne_n\left(H_j\right)
\;=\;
e_{N+1}\left(H_j\right)-L_{\Omega_N}\left[e_{N+1}\right]\left(H_j\right)
\;=\;
0
\,.
\end{eqnarray*}
On the other hand, the element $e_{N+1}(z)$ becomes
$e_{N+1}(z)-\sum_{n=1}^N\alpha_ne_n(z)=e_{N+1}(z)-L_{\Omega_N}\left[e_{N+1}\right](z)$.
Thus
\begin{eqnarray*}
\vdm
\left(
H_1,H_2,\ldots,H_{N},z
\right)
& = &
\;\;\;\;\;\;\;\;\;\;\;\;\;\;\;\;\;\;\;\;\;\;\;\;\;\;\;\;\;\;\;\;\;\;\;\;\;\;\;\;\;\;\;\;
\;\;\;\;\;\;\;\;\;\;\;\;\;\;\;\;\;\;\;\;\;\;\;\;\;\;\;\;\;\;\;\;\;\;\;\;\;\;\;\;\;\;\;\;
\;\;\;\;\;\;\;\;\;\;\;\;\;\;\;\;\;\;\;\;\;\;\;\;\;\;\;\;\;\;\;\;\;\;\;\;\;\;\;\;\;\;\;\;
\end{eqnarray*}
\begin{eqnarray}\nonumber
\;\;\;\;\;\;\;\;\;\;\;\;\;\;\;\;\;\;\;\;\;\;\;\;
& = &
\left|
\begin{array}{ccccc}
e_1\left(H_1\right) & e_1\left(H_2\right) & \cdots & e_1\left(H_N\right) & e_{1}(z)
\\
e_2\left(H_1\right) & e_2\left(H_2\right) & \cdots & e_2\left(H_N\right) & e_{2}(z)
\\
\vdots & \vdots & \ddots & \vdots & \vdots
\\
e_N\left(H_1\right) & e_N\left(H_2\right) & \cdots & e_N\left(H_N\right) & e_{N}(z)
\\
0 & 0 & \cdots & 0 & e_{{N+1}}(z)-L_{\Omega_N}\left[e_{N+1}\right](z)
\end{array}
\right|
\\\nonumber
\\\nonumber
& = &
\left[
e_{{N+1}}(z)-L_{\Omega_N}\left[e_{N+1}\right](z)
\right]
\times
\vdm
\left(
H_1,H_2,\ldots,H_{N}
\right)
\,.
\end{eqnarray}

We begin with the first case~$(i)$. Since $N=N_d$, we have
$\Omega_N=\left\{\left(\eta^{(1)}_{k_1},\ldots,\eta^{(s)}_{k_s}\right)\,,\;|k|\leq d\right\}$
and 
$\mathcal{P}_N=\C_d[z]$.
On the other hand, by the definition of numeration~(\ref{numNs}),
$k(N)=(d,0,\ldots,0)$ is the biggest $n$-tuple $k$ such that $|k|=d$. It follows that
$k(N+1)=(0,\ldots,0,d+1)$ then:\\
$\left\{\begin{array}{rcl}
H_N=\eta_{k(N)}
& = &
\left(\eta^{(1)}_d,\eta^{(2)}_0,\ldots,\eta^{(s)}_0\right)\\
e_N(z) & = &
z_1^d
\end{array}\right.$
\; and \;\;
$\left\{\begin{array}{rcl}
H_{N+1}=\eta_{k(N+1)}
& = &
\left(\eta^{(1)}_0,\ldots,\eta^{(s-1)}_0,\eta^{(s)}_{d+1}\right)\\
e_{N+1}(z) & = &
z_s^{d+1}\,.
\end{array}\right.$\\
The proof of~(\ref{explicitVDMi}) is equivalent to:
\begin{eqnarray}\label{explicitVDMiaux}
\forall\,z\in\C^s\,,
& &
L_{\Omega_N}[e_{N+1}](z)
\;=\;
e_{N+1}(z)-\prod_{i_s=0}^d\left(z_s-\eta^{(s)}_{i_s}\right)
\,.
\end{eqnarray}
On the one hand, 
\begin{eqnarray*}
e_{N+1}(z)-\prod_{i_s=0}^d\left(z_s-\eta^{(s)}_{i_s}\right)
\;=\;
z_s^{d+1}-\left(z_s^{d+1}+Q_d(z_s)\right)
\;=\;
-Q_d(z_s)\,,
\end{eqnarray*}
where $Q_d\in\C_d[z_s]\subset\C_d[z]=\mathcal{P}_N$
(as well as $L_{\Omega_N}[e_{N+1}](z)$ by definition of the Lagrange
polynomial constructed from $\Omega_N$).
\\
On the other hand, we claim that~(\ref{explicitVDMiaux}) is valid on the subset
$\Omega_N$. Indeed, let be 
$\eta_l=\left(\eta^{(1)}_{l_1},\ldots,\eta^{(s)}_{l_s}\right)\in\Omega_N$, then $|l|\leq d$ and
$l_s\leq|l|\leq d$. Thus
\begin{eqnarray*}
e_{N+1}\left(\eta_l\right)-\prod_{i_s=0}^d\left(\eta^{(s)}_{l_s}-\eta^{(s)}_{i_s}\right)
\;=\;
\left(\eta^{(s)}_{l_s}\right)^{d+1}-0
\;=\;
L_{\Omega_N}[e_{N+1}]\left(\eta_l\right)
\,.
\end{eqnarray*}
Finally, both polynomials belong to $\mathcal{P}_N$ and coincide on the subset $\Omega_N$
that is unisolvent for $\mathcal{P}_N$. This proves equality~(\ref{explicitVDMiaux}).

\medskip

Now we deal with second case~$(ii)$. Since $N_d<N<N_{d+1}$, we have 
$|k(N)|=d+1$.
Either 
$m\leq s-1$ or $m=s$.\\
$(a)$ If $m\leq s-1$, then
$k(N)=(k_1,\ldots,k_m,0,\ldots,0)$ with
$k_1+\cdots+k_m=|k|=d+1$. Then
$k(N+1)=(k_1,\ldots,k_{m-2},k_{m-1}+1,0,\ldots,0,k_m-1)$
(since $2\leq m\leq s-1$ and $k_m\geq1$) and we get
\begin{eqnarray*}
\left\{\begin{array}{rcl}
H_{N+1}\;=\;
\eta_{k(N+1)}
& = &\left(\eta^{(1)}_{k_1},\ldots,\eta^{(m-2)}_{k_{m-2}},\eta^{(m-1)}_{k_{m-1}+1},
\eta^{(m)}_0,\ldots,\eta^{(s-1)}_{0},\eta^{(s)}_{k_m-1}\right)
\\
e_{N+1}(z) & = &
\left(\prod_{j=1}^{m-2}z_j^{k_j}\right)
\times
z_{m-1}^{k_{m-1}+1}z_s^{k_m-1}
\,.
\end{array}\right.
\end{eqnarray*}
Once again, we want to prove that 
\begin{eqnarray}\label{explicitVDMiiaux}
\forall\,z\in\C^s\,,
& &
L_{\Omega_N}[e_{N+1}](z)
\;=\;
e_{N+1}(z)-P_N(z)
\,.
\end{eqnarray}
(where $P_N$ is defined by~(\ref{explicitVDMii})).\\
On the one hand, we have
\begin{eqnarray*}
P_N(z)=\prod_{j=1}^{m-2}\left(z_j^{k_j}+Q_j(z_j)\right)
\times\left(z_{m-1}^{k_{m-1}+1}+Q_{m-1}(z_{m-1})\right)
\times
\left(z_s^{k_m-1}+Q_s(z_s)\right)
\end{eqnarray*}
where for all $j=1,\ldots,m-2$ (resp., $j=m-1$, $s$),
$Q_j\in\C_{k_j-1}[z_j]$ (resp., $Q_{m-1}\in\C_{k_{m-1}}[z_{m-1}]$,
$Q_s\in\C_{k_{m}-2}[z_s]$).
Notice that if $k_j=0$ (resp., $k_m=1$), then $Q_j=0$ (resp., $Q_{s}=0$)
whose degree is at most $-1$.
It follows that, after expanding $P_N(z)$, we get
\begin{eqnarray*}
e_{N+1}(z)-P_N(z)
& = &
0-\sum_{u=1}^M\left(\prod_{j=1}^{m-1}R_{u,j}(z_j)\right)\times R_{u,s}(z_s)
\,,
\end{eqnarray*}
where $M\geq1$, $R_{u,j}(z_j)=z_j^{k_j}$ or $Q_j(z_j)$ for all $j=1,\ldots,m-2$
(resp., $R_{u,m-1}(z_{m-1})=z_{m-1}^{k_{m-1}+1}$ or $Q_{m-1}(z_{m-1})$,
$R_s(z_s)=z_s^{k_m-1}$ or $Q_s(z_s)$).
Moreover, for all $u=1,\ldots,M$, there exists (at least) $j_u=1,\ldots,m-2$
(resp., $j_u=m-1$, $s$) such that $R_{u,j_u}(z_{j_u})=Q_{j_u}(z_{j_u})$
(resp., $R_{u,m-1}(z_{m-1})=Q_{m-1}(z_{m-1})$, $R_{u,s}(z_s)=Q_s(z_s)$).
In any case, the total degree of each product is
\begin{eqnarray*}
\deg\left[\left(\prod_{j=1}^{m-1}R_{u,j}(z_j)\right)\times R_{u,s}(z_s)\right]
\;\leq\;
\sum_{j=1}^{m-2}k_j+(k_{m-1}+1)+(k_m-1)-1
\;=\;
|k(N)|-1\;=\;d,
\end{eqnarray*}
thus
$e_{N+1}(z)-P_N(z)\in\C_d[z]\subset\mathcal{P}_N$
(as well as $L_{\Omega_N}[e_{N+1}](z)$).

On the other hand, in order to
check the validity of~(\ref{explicitVDMiiaux}) on the subset $\Omega_N$,
it is sufficient to prove that 
$P_N\left(\eta_l\right)=0$, $\forall\,\eta_l\in\Omega_N$.
Indeed, by the property of the Lagrange polynomial
$L_{\Omega_N}\left[e_{N+1}\right]$, we will have:
$\forall\,\eta_l\in\Omega_N$,
$L_{\Omega_N}\left[e_{N+1}\right]\left(\eta_l\right)
=e_{N+1}\left(\eta_l\right)
=e_{N+1}\left(\eta_l\right)-P_N\left(\eta_l\right)$.

Let be $\eta_l\in\Omega_N$, either $|l|\leq d$ or $|l|=d+1$.

If $|l|\leq d$, then either there is $j\in[\![1,m-2]\!]$ such that
$l_j\leq k_j-1$, either $l_{m-1}\leq k_{m-1}$ or $l_s\leq k_m-2$. Otherwise
we would have\\
$d\geq|l|\geq\sum_{j=1}^{m-2}l_j+l_{m-1}+l_s
\geq\sum_{j=1}^{s-2}k_j+(k_{m-1}+1)+(k_m-1)=|k(N)|=d+1$,
and that is impossible.
Necessarily, either there is $j\in[\![1,m-2]\!]$ such that
$l_j\leq k_j-1$ (then 
$\prod_{i_j=0}^{k_j-1}\left(\eta^{(j)}_{l_j}-\eta^{(j)}_{i_j}\right)=0$),
either $l_{m-1}\leq k_{m-1}$ (then 
$\prod_{i_{m-1}=0}^{k_{m-1}}\left(\eta^{(m-1)}_{l_{m-1}}-\eta^{(m-1)}_{i_{m-1}}\right)=0$)
or $l_s\leq k_m-2$ (then 
$\prod_{i_s=0}^{k_m-2}\left(\eta^{(s)}_{l_s}-\eta^{(s)}_{i_s}\right)=0$).
In any case $P_N\left(\eta_l\right)=0$.

If $|l|=d+1$, then if there is $j\in[\![1,m-2]\!]$ such that
$l_j\leq k_j-1$, we get
$\prod_{i_j=0}^{k_j-1}\left(\eta^{(j)}_{l_j}-\eta^{(j)}_{i_j}\right)=0$)
and $P_N\left(\eta_l\right)=0$. Otherwise
$l_j\geq k_j$ for all $j=1,\ldots,m-2$. In particular,
$l_1\geq k_1$ then $l_1=k_1$ since
$l\leq k(N)$ (by the definition of the lexicographic order
for $|l|=d+1=|k(N)|$, see~(\ref{lexicorder})).
It follows that $l_2\leq k_2$, i.e. $l_2=k_2$, and 
an immediate induction on $j=1,\ldots,m-2$ yields
$l_j=k_j$.\\
Once again, since $l\leq k(N)$, we necessarily 
have $l_{m-1}\leq k_{m-1}$ then 
$\prod_{i_{m-1}=0}^{k_{m-1}}\left(\eta^{(m-1)}_{l_{m-1}}-\eta^{(m-1)}_{i_{m-1}}\right)=0$)
and $P_N\left(\eta_l\right)=0$.

As a conclusion, both polynomials $L_{\Omega_N}[e_{N+1}](z)$
and $e_{N+1}(z)-P_N(z)$ belong to $\mathcal{P}_N$ and coincide on
$\Omega_N$ that is unisolvent for $\mathcal{P}_N$.
This proves equality~(\ref{explicitVDMiiaux}).

\medskip

$(b)$ If $m=s$, i.e.
$k(N)=\left(k_1,\ldots,k_s\right)$ with $k_s\geq1$,
we get
$k(N+1)=\left(k_1,\ldots,k_{s-2},k_{s-1}+1,k_s-1\right)$,
then 
\begin{eqnarray*}
\left\{\begin{array}{rcl}
H_{N+1}\;=\;
\eta_{k(N+1)}
& = &\left(\eta^{(1)}_{k_1},\ldots,\eta^{(s-2)}_{k_{s-2}},\eta^{(s-1)}_{k_{s-1}+1},
\eta^{(s)}_{k_s-1}\right)
\\
e_{N+1}(z) & = &
\left(\prod_{j=1}^{m-2}z_j^{k_j}\right)
\times
z_{s-1}^{k_{s-1}+1}z_s^{k_s-1}
\,.
\end{array}\right.
\end{eqnarray*}
It remains to prove that for all $z\in\C^s$,
\begin{eqnarray*}
L_{\Omega_N}[e_{N+1}](z)
& = &
e_{N+1}(z)
-
\prod_{j=1}^{s-2}\left[\prod_{i_j=0}^{k_j-1}\left(z_j-\eta^{(j)}_{i_j}\right)\right]
\times
\prod_{i_{s-1}=0}^{k_{s-1}}\left(z_{s-1}-\eta^{(s-1)}_{i_{s-1}}\right)
\times
\prod_{i_s=0}^{k_s-2}\left(z_s-\eta^{(s)}_{i_s}\right)
\,.
\end{eqnarray*}
The proof is similar as in the previous case~$(a)$. On the other hand, 
the associated expression of $P_N$ follows by replacing 
$m$ with $s$ in formula~(\ref{explicitVDMii}).

\end{proof}

\begin{remark}\label{OmegaNunisolvent}

By this way, we could also get back by induction on $N\geq1$
the proof that $\Omega_N$ is
unisolvent for $\mathcal{P}_N$. Indeed, the induction hypothesis
allows us to consider the Lagrange polynomial $L_{\Omega_N}[e_{N+1}]$
in order to obtain the usefull factorisation~(\ref{explicitVDMi})
(resp., ~(\ref{explicitVDMii})) from which we can deduce
(by replacing $z$ with $H_{N+1}$) that
$\vdm\left(H_1,\ldots,H_{N},H_{N+1}\right)\neq0$
(since every $\left(\eta^{(j)}_i\right)_{i\geq0}$ is a sequence
of pairwise distinct elements).
This finally proves that $\Omega_{N+1}$ is still unisolvent for
$\mathcal{P}_{N+1}$ and the induction is achieved.

\end{remark}

\subsection{Proofs of Theorem~\ref{explicitlejasequ} and Corollary~\ref{redemschifsic}}

Now we can give the proof of Theorem~\ref{explicitlejasequ}.

\begin{proof}

We begin with the first implication. Let be $\left(H_N\right)_{N\geq1}$
an intertwining and Leja sequence for $K=K_1\times\cdots\times K_s$
and let fix $j\in[\![1,s]\!]$. We prove by induction on $n\geq0$
that $\left(\eta^{(j)}_0,\ldots,\eta^{(j)}_n\right)$
is an $(n+1)$-Leja section.

First, we always have 
$\eta^{(j)}_0\in\partial K_j$ then
$\left(\eta^{(j)}_0\right)$ is a $1$-Leja section.

Next, let be $n\geq0$ such that
$\left(\eta^{(j)}_0,\ldots,\eta^{(j)}_n\right)$ is an $(n+1)$-Leja section.
Two cases must be studied.

If $1\leq j\leq s-1$, we consider 
$N$ such that $k(N)=(0,\ldots,n,1,0,\ldots,0)$,
where the index of $n$ is $j$
(such an $N\geq1$ exists and is unique).
Then $k(N+1)=(0,\ldots,0,n+1,0,\ldots,0)$ and
an application of~(\ref{explicitVDMii}) from Proposition~\ref{explicitVDM}
(with $m=j+1$, $k_j(N)=n$, $k_{j+1}(N)=1$ and $k_i(N)=0$ otherwise)
gives for all $z\in\C^s$:
\begin{eqnarray*}
\vdm\left(H_1,\ldots,H_N,z\right)
& = &
\prod_{i=0}^n\left(z_j-\eta^{(j)}_i\right)
\times\vdm\left(H_1,\ldots,H_N\right)
\,.
\end{eqnarray*}
It follows that 
$H_{N+1}=\eta_{k(N+1)}
=\left(\eta^{(1)}_0,\ldots,\eta^{(j-1)}_0,\eta^{(j)}_{n+1},\eta^{(j+1)}_0,\ldots,\eta^{(s)}_0\right)$ 
reaches\\
$\max_{z\in K_1\times\cdots\times K_s}\left|\vdm\left(H_1,\ldots,H_N,z\right)\right|$ 
if and only if
$\eta^{(j)}_{n+1}$ reaches
$\max_{z_j\in K_j}\left|\prod_{i=0}^n\left(z_j-\eta^{(j)}_i\right)\right|$.
Since $\left(\eta^{(j)}_0,\ldots,\eta^{(j)}_n\right)$ is an $(n+1)$-Leja section by
induction hypothesis, we deduce that\\
$\left(\eta^{(j)}_0,\ldots,\eta^{(j)}_n,\eta^{(j)}_{n+1}\right)$
is an $(n+2)$-Leja section as well.

Otherwise $j=s$ and we consider $N$ such that
$k(N)=(n,0,\ldots,0)$. Then\\
$k(N+1)=(0,\ldots,0,n+1)$ and an application of~(\ref{explicitVDMi}) from 
Proposition~\ref{explicitVDM}
leads to:
\begin{eqnarray*}
\forall\,z\in\C^s,
& &
\vdm\left(H_1,\ldots,H_N,z\right)
\;=\;
\prod_{i=0}^n\left(z_s-\eta^{(s)}_i\right)
\times\vdm\left(H_1,\ldots,H_N\right)
\,.
\end{eqnarray*}
Thus $H_{N+1}=\left(\eta^{(1)}_0,\ldots,\eta^{(s-1)}_0,\eta^{(s)}_{n+1}\right)$
reaches
$\max_{z\in K_1\times\cdots\times K_s}\left|\vdm\left(H_1,\ldots,H_N,z\right)\right|$ 
if and only if
$\eta^{(s)}_{n+1}$ reaches
$\max_{z_s\in K_s}\left|\prod_{i=0}^n\left(z_s-\eta^{(s)}_i\right)\right|$.
Since $\left(\eta^{(s)}_0,\ldots,\eta^{(s)}_n\right)$ is an $(n+1)$-Leja section by
induction hypothesis, it follows that
$\left(\eta^{(s)}_0,\ldots,\eta^{(s)}_n,\eta^{(s)}_{n+1}\right)$
is an $(n+2)$-Leja section as well.

The induction is achieved in all the cases for $j=1,\ldots,s$ \
and this proves the first implication.

\medskip

Now we deal with the reverse implication. We consider
$\left(H_N\right)_{N\geq1}$ the intertwining sequence
from the $s$ Leja sequences
$\left(\eta^{(j)}_{i_j}\right)_{i_j\geq0}$ that also satisfy
$\eta^{(j)}_0\in\partial K_j$ for all $j=1,\ldots,s$.
The proof is by induction on $N\geq1$. 

First,
we immediately get 
$H_1=\left(\eta^{(1)}_0,\ldots,\eta^{(s)}_0\right)
\in\prod_{j=1}^s\partial K_j\subset\partial\left(\prod_{j=1}^sK_j\right)$
then $\left(H_1\right)$ is a $1$-Leja section.

Next, let be $N\geq1$. An application of Proposition~\ref{explicitVDM}
with $z=H_{N+1}=\eta_{k(N+1)}$ yields
$\vdm\left(H_1,\ldots,H_N,H_{N+1}\right)
=P_N\left(H_{N+1}\right)\times\vdm\left(H_1,\ldots,H_N\right)$.
Without loss of generality, we can assume that $P_N$ is given by~(\ref{explicitVDMii}),
the other case being similar. 
We then have $k(N)=\left(k_1,\ldots,k_m,0,\ldots,0\right)$
with $2\leq m\leq s$ and $k_m\geq1$, and
$k(N+1)=\\
\left(k_1,\ldots,k_{m-2},k_{m-1}+1,0,\ldots,0,k_m-1\right)$
(if $m=s$, we just have to ignore the chain of $0$'s).
Since all the sequences
$\left(\eta^{(j)}_{i_j}\right)_{i_j\geq0}$ are one-dimensional Leja sequences, we can deduce that
for all $j=1,\ldots,m-2$ (resp., $j=m$),
we have\\
$\left|\prod_{i_j=0}^{k_j-1}\left(\eta^{(j)}_{k_j}-\eta^{(j)}_{i_j}\right)\right|=
\max_{z_j\in K_j}\left|\prod_{i_j=0}^{k_j-1}\left(z_j-\eta^{(j)}_{i_j}\right)\right|$
(resp., 
$\left|\prod_{i_s=0}^{k_m-2}\left(\eta^{(s)}_{k_m-1}-\eta^{(s)}_{i_s}\right)\right|
=\\
\max_{z_s\in K_s}\left|\prod_{i_s=0}^{k_m-2}\left(z_s-\eta^{(s)}_{i_s}\right)\right|$).
Notice that if $k_j=0$ (resp., $k_m=1$), 
the associated product is empty then equals $1$ whose maximum is reached on any
point, in particular on $\eta^{(j)}_0=\eta^{(j)}_{k_j}$
(resp., $\eta^{(s)}_0=\eta^{(s)}_{k_m-1}$).
Similarly, one has
$\left|\prod_{i_{m-1}=0}^{k_{m-1}}\left(\eta^{(m-1)}_{k_{m-1}+1}-\eta^{(m-1)}_{i_{m-1}}\right)\right|=\\
\max_{z_{m-1}\in K_{m-1}}\left|\prod_{i_{m-1}=0}^{k_{m-1}}\left(z_{m-1}-\eta^{(m-1)}_{i_{m-1}}\right)\right|$.
It follows that
\begin{eqnarray*}
\left|\vdm\left(H_1,\ldots,H_N,H_{N+1}\right)\right|
& = &
\qquad\qquad\qquad\qquad\qquad\qquad\qquad
\qquad\qquad\qquad\qquad\qquad\qquad\qquad
\end{eqnarray*}
\begin{eqnarray*}
& = &
\prod_{j=1}^{m-2}
\left[
\prod_{i_j=0}^{k_j-1}\left|\eta^{(j)}_{k_j}-\eta^{(j)}_{i_j}\right|
\right]
\prod_{i_{m-1}=0}^{k_{m-1}}\left|\eta^{(m-1)}_{k_{m-1}+1}-\eta^{(m-1)}_{i_{m-1}}\right|
\prod_{i_s=0}^{k_m-2}\left|\eta^{(s)}_{k_m-1}-\eta^{(s)}_{i_s}\right|
\times
\left|\vdm\left(H_1,\ldots,H_N\right)\right|
\\
& = &
\prod_{j=1}^{m-2}
\left[
\max_{z_j\in K_j}\prod_{i_j=0}^{k_j-1}\left|z_j-\eta^{(j)}_{i_j}\right|
\right]
\times
\max_{z_{m-1}\in K_{m-1}}\prod_{i_{m-1}=0}^{k_{m-1}}\left|z_{m-1}-\eta^{(m-1)}_{i_{m-1}}\right|
\times
\max_{z_s\in K_s}\prod_{i_s=0}^{k_m-2}\left|z_s-\eta^{(s)}_{i_s}\right|
\times
\\
& &
\qquad\qquad\qquad\qquad\qquad\qquad\qquad\qquad
\qquad\qquad\qquad\qquad\qquad\qquad\qquad
\times
\left|\vdm\left(H_1,\ldots,H_N\right)\right|
\\
& = &
\max_{z\in K_1\times\cdots\times K_s}\left|P_N(z)\right|
\times
\left|\vdm\left(H_1,\ldots,H_N\right)\right|
\\
& = &
\max_{z\in K_1\times\cdots\times K_s}
\left|\vdm\left(H_1,\ldots,H_N,z\right)\right|
\,,
\end{eqnarray*}
the last equality being another application of Proposition~\ref{explicitVDM}.

On the other hand, we know by the induction hypothesis that
$\left(H_1,\ldots,H_N\right)$ is an $N$-Leja section.
It follows that $\left(H_1,\ldots,H_N,H_{N+1}\right)$
is also an $(N+1)$-Leja section and the induction is achieved,
as well as the proof of the theorem.

\end{proof}

Now we can give the proof of Corollary~\ref{redemschifsic} given in the Introduction, that is the formula
for the two-variable Vandermonde determinant of Schiffer and Siciak.

\begin{proof}

Here we deal with a bidimensional intertwining sequence that we can
denote 
$\left(\eta_k,\theta_l\right)$ with the associated numeration~(\ref{numNs}) for 
$(k,l)\in\N^2$. The proof of the corollary is by induction on $d\geq0$.

For $d=0$, we have
$\vdm\left(\eta_0,\theta_0\right)
=\det\left(1\right)=1=\prod_{\emptyset}$\,.

Now let be $d\geq0$. Respective applications of~$(i)$ 
and~$(ii)$ from Proposition~\ref{explicitVDM} (with $m=s=2$) give:
\begin{enumerate}

\item[$(i)$]
if $N=N_d=\dfrac{(d+1)(d+2)}{2}$ with $d\geq0$, then
$H_N=\left(\eta_d,\theta_0\right)$,
$H_{N+1}=\left(\eta_0,\theta_{d+1}\right)$ and
\begin{eqnarray}\label{redemschifsici}
\vdm\left(H_1,\ldots,H_N,H_{N+1}\right)
& = &
\prod_{j=0}^d\left(\theta_{d+1}-\theta_j\right)
\times\vdm\left(H_1,\ldots,H_N\right)
\,;
\end{eqnarray}

\item[$(ii)$]
otherwise $N_d<N<N_{d+1}$ with $d\geq0$, then
$H_N=\left(\eta_k,\theta_l\right)$,
$H_{N+1}=\left(\eta_{k+1},\theta_{l-1}\right)$
with $k+l=d+1$, $l\geq1$ and
\begin{eqnarray}\label{redemschifsicii}
& &
\vdm\left(H_1,\ldots,H_N,H_{N+1}\right)
\;=\;
\prod_{i=0}^k\left(\eta_{k+1}-\eta_i\right)
\times
\prod_{j=0}^{l-2}\left(\theta_{l-1}-\theta_j\right)
\times
\vdm\left(H_1,\ldots,H_N\right)
\,.
\end{eqnarray}

\end{enumerate}

Successive applications of~(\ref{redemschifsicii}) 
(with $(k,l)=(d+1,0),(d,1),\ldots,(1,d)$ and 
$H_{N+1}=\left(\eta_{k},\theta_{l}\right)$) yield
\begin{eqnarray*}
\vdm
\left(
\left(\eta_0,\theta_0\right),\left(\eta_0,\theta_1\right),
\left(\eta_1,\theta_0\right),\ldots,\left(\eta_d,\theta_0\right),
\left(\eta_0,\theta_{d+1}\right),\ldots,\left(\eta_{d+1},\theta_0\right)
\right)
& = &
\;\;\;\;\;\;\;\;\;\;\;\;\;\;\;\;\;\;\;\;\;\;\;\;\;\;\;\;\;\;\;\;\;\;\;\;
\;\;\;\;\;\;\;\;\;\;\;\;\;\;\;\;\;\;\;\;\;\;\;\;\;\;\;\;\;\;\;\;\;\;\;\;
\end{eqnarray*}
\begin{eqnarray*}
& = &
\prod_{i=0}^{d}\left(\eta_{d+1}-\eta_i\right)
\times
\vdm
\left(
\left(\eta_0,\theta_0\right),\ldots,\left(\eta_d,\theta_0\right),
\left(\eta_0,\theta_{d+1}\right),\ldots,\left(\eta_d,\theta_1\right)
\right)
\\
& = &
\prod_{i=0}^{d}\left(\eta_{d+1}-\eta_i\right)
\prod_{i=0}^{d-1}\left(\eta_d-\eta_i\right)
\left(\theta_1-\theta_0\right)
\vdm
\left(
\left(\eta_0,\theta_0\right),\ldots,\left(\eta_d,\theta_0\right),
\left(\eta_0,\theta_{d+1}\right),\ldots,\left(\eta_{d-1},\theta_2\right)
\right)
\\
&  \vdots & 
\\
& = &
\prod_{m=0}^d
\left[
\prod_{i=0}^{m}\left(\eta_{m+1}-\eta_i\right)
\,
\prod_{j=0}^{d-m-1}\left(\theta_{d-m}-\theta_j\right)
\right]
\times
\vdm
\left(
\left(\eta_0,\theta_0\right),\ldots,\left(\eta_d,\theta_0\right),\left(\eta_0,\theta_{d+1}\right)
\right)
\\
& = &
\prod_{m=0}^d
\left[
\prod_{i=0}^{m}\left(\eta_{m+1}-\eta_i\right)
\prod_{j=0}^{d-m-1}\left(\theta_{d-m}-\theta_j\right))
\right]
\times
\prod_{j=0}^d
\left(\theta_{d+1}-\theta_j\right)
\times
\vdm
\left(
\left(\eta_0,\theta_0\right),\ldots,\left(\eta_d,\theta_0\right)
\right),
\end{eqnarray*}
the last equality being an application of~(\ref{redemschifsici}).
It follows that
\begin{eqnarray*}
\vdm
\left(
\left(\eta_0,\theta_0\right),\left(\eta_0,\theta_1\right),
\left(\eta_1,\theta_0\right),\ldots,\left(\eta_d,\theta_0\right),
\left(\eta_0,\theta_{d+1}\right),\ldots,\left(\eta_{d+1},\theta_0\right)
\right)
& = &
\;\;\;\;\;\;\;\;\;\;\;\;\;\;\;\;\;\;\;\;\;\;\;\;\;\;\;\;\;\;\;\;
\;\;\;\;\;\;\;\;\;\;\;\;\;\;\;\;\;\;\;\;\;\;\;\;\;\;\;\;\;\;\;\;\;\;\;\;
\end{eqnarray*}
\begin{eqnarray*}
& = &
\prod_{m=0}^d
\left[
\prod_{i=0}^{m}\left(\eta_{m+1}-\eta_i\right)
\,\times\,
\prod_{j=0}^{m}\left(\theta_{m+1}-\theta_j\right)
\right]
\times
\vdm
\left(
\left(\eta_0,\theta_0\right),\ldots,\left(\eta_d,\theta_0\right)
\right)
\,,
\end{eqnarray*}
because
$\prod_{m=0}^d\prod_{j=0}^{d-m-1}\left(\theta_{d-m}-\theta_j\right)=
\prod_{m=0}^{d-1}\prod_{j=0}^{d-m-1}\left(\theta_{d-m}-\theta_j\right)=
\prod_{m=0}^{d-1}\prod_{j=0}^m\left(\theta_{m+1}-\theta_j\right)$.
Since
\begin{eqnarray*}
\prod_{m=0}^d
\left[
\prod_{i=0}^{m}\left(\eta_{m+1}-\eta_i\right)
\right]
\;=\;
\prod_{0\leq i<j\leq d+1}
\left(\eta_j-\eta_i\right)
\;=\;
\vdm\left(\eta_0,\ldots,\eta_{d+1}\right)
\end{eqnarray*}
(similarly, 
$\prod_{m=0}^d
\left[\,\prod_{j=0}^{m}\left(\theta_{m+1}-\theta_j\right)\right]
=\vdm\left(\theta_0,\ldots,\theta_{d+1}\right)$),
we finally get
\begin{eqnarray*}
\vdm
\left(
\left(\eta_0,\theta_0\right),\left(\eta_0,\theta_1\right),
\left(\eta_1,\theta_0\right),\ldots,\left(\eta_d,\theta_0\right),
\left(\eta_0,\theta_{d+1}\right),\ldots,\left(\eta_{d+1},\theta_0\right)
\right)
& = &
\;\;\;\;\;\;\;\;\;\;\;\;\;\;\;\;\;\;\;\;\;\;\;\;\;\;\;\;\;\;\;\;
\;\;\;\;\;\;\;\;\;\;\;\;\;\;\;\;\;\;\;\;\;\;\;\;\;\;\;\;\;\;\;\;\;\;\;\;
\end{eqnarray*}
\begin{eqnarray*}
& = &
\vdm\left(\eta_0,\ldots,\eta_{d+1}\right)
\times
\vdm\left(\theta_0,\ldots,\theta_{d+1}\right)
\times
\vdm
\left(
\left(\eta_0,\theta_0\right),\ldots,\left(\eta_d,\theta_0\right)
\right)
\,,
\end{eqnarray*}
and the induction is achieved.

\end{proof}

\subsection{On the non-intertwining Leja sequences.}\label{Lejanonintertwining}

Here we consider the special case of $s=2$ and $K=\overline{\D}^2$.
We begin with setting
\begin{eqnarray*}
H_1\;=\;(1,1)\,,\quad
H_2\;=\;\left(-1,-1\right)\,,\quad
H_3\;=\;\left(e^{i\pi/4},-e^{i\pi/4}\right)
\end{eqnarray*}
and for all $n\geq3$, $H_n$ is constructed by induction
so that
\begin{eqnarray}\label{ctrexLejanonint2}
\left|
\vdm\left(H_1,\ldots,H_n,H_{n+1}\right)
\right|
& = &
\sup_{(z,w)\in\overline{\D}^2}
\left|
\vdm\left(H_1,\ldots,H_n,(z,w)\right)
\right|
\,.
\end{eqnarray}
Since 
\begin{eqnarray*}
\vdm\left(H_1,(z,w)\right)
\;=\;
\begin{array}{|cc|}
1 & 1\\
1 & w
\end{array}
\;=\;
w-1
\end{eqnarray*}
and 
\begin{eqnarray*}
\vdm\left(H_1,H_2,(z,w)\right)
\;=\;
\begin{array}{|ccc|}
1 & 1 & 1\\
1 & -1 & w\\
1 & -1 & z
\end{array}
\;=\;
2(w-z)
\;,
\end{eqnarray*}
we have
\begin{eqnarray*}
\left|\vdm\left(H_1,H_2\right)\right|
& = &
\sup_{(z,w)\in\overline{\D}^2}
\left|\vdm\left(H_1,(z,w)\right)\right|
\end{eqnarray*}
and
\begin{eqnarray*}
\left|\vdm\left(H_1,H_2,H_3\right)\right|
& = &
\sup_{(z,w)\in\overline{\D}^2}
\left|\vdm\left(H_1,H_2,(z,w)\right)\right|
\end{eqnarray*}
(as well as $H_1\in\partial K$).
This proves that $\left(H_1,H_2,H_3\right)$ is a $3$-Leja section.

On the other hand, equation~(\ref{ctrexLejanonint2}) is solvable
for all $n\geq3$ then the sequence $\left(H_n\right)_{n\geq1}$
is well-defined and is a Leja sequence for the compact subset
$\overline{\D}^2$ (although the other terms $H_n$ cannot be made
explicit for $n\geq4$).

Finally, $\left(H_n\right)_{n\geq1}$ cannot be written as an intertwining sequence.
Indeed, if it were the case, there would exists two sequences
$\left(\eta_i\right)_{i\geq0}$
and $\left(\theta_j\right)_{j\geq0}$ from which
$\left(H_n\right)_{n\geq1}$ would be the intertwining one.
In particular, we would have
$\left(\eta_0,\theta_0\right)=H_1=(1,1)$
and 
$\left(\eta_0,\theta_1\right)=H_2=(-1,-1)$
then $1=\eta_0=-1$
and that is impossible.

In addition, none of the components of
$\left(H_n\right)_{n\geq1}$ is a Leja sequence
because
$\left(1,-1,e^{i\pi/4}\right)$
(resp., $\left(1,-1,-e^{i\pi/4}\right)$)
is not a $3$-Leja section.

We can deduce the following result.

\begin{proposition}\label{ctrexLejanonint}

There exists a bidimensional Leja sequence that cannot be written
as an intertwining one. In addition, none of its components
is a (one-dimensional) Leja sequence.

\end{proposition}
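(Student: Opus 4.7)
The plan is to build an explicit sequence in $\overline{\D}^2$ that (a)~is Leja by construction, (b)~has its first two terms incompatible with any intertwining structure, and (c)~has initial coordinate sequences that fail to satisfy the $1$-dimensional Leja condition.

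First I would fix $H_1=(1,1)\in\partial\overline{\D}^2$ together with $H_2=(-1,-1)$ and $H_3=(e^{i\pi/4},-e^{i\pi/4})$, chosen so that the obstructions to being intertwining and to having Leja components both appear already at rank~$3$. I would then define $H_n$ for $n\geq4$ recursively by requiring~(\ref{ctrexLejanonint2}); the recursion is solvable at each step because we maximize a continuous function on the compact set $\overline{\D}^2$ (although the maximizer need not be unique). The sequence so obtained is a Leja sequence for $\overline{\D}^2$ provided the first three terms already form a $3$-Leja section, which I verify next.

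The Leja property for $(H_1,H_2,H_3)$ follows from direct expansion of the two relevant Vandermonde determinants: one finds $\vdm(H_1,(z,w))=w-1$ and $\vdm(H_1,H_2,(z,w))=2(w-z)$, whose suprema over $\overline{\D}^2$ equal $2$ and $4$ respectively. The first supremum is attained at $(z,w)=H_2$, and the second, bounded by $2(|w|+|z|)\leq 4$ with equality when $w=-z$ lies on the unit circle, is attained at $H_3$. The non-intertwining property then reduces to a short contradiction based on the numeration~(\ref{numNs}) for $\N^2$: the first three indices are $k(1)=(0,0)$, $k(2)=(0,1)$ and $k(3)=(1,0)$, so any intertwining writing $H_n=(\eta_{k_1(n)},\theta_{k_2(n)})$ would force simultaneously $\eta_0=1$ (from $H_1$) and $\eta_0=-1$ (from $H_2$). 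Likewise, the sequence of first coordinates $(1,-1,e^{i\pi/4})$ fails to be a $3$-Leja section because $\sup_{z\in\overline{\D}}|z^2-1|=2$ is attained at $z=\pm i$ while $|e^{i\pi/2}-1|=\sqrt 2<2$, and the analogous computation disqualifies the second coordinates $(1,-1,-e^{i\pi/4})$.

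The only subtlety I expect is the calibration of $H_3$: it must not only realize the supremum of the third Vandermonde determinant (which admits many other maximizers, such as $(\pm i,\mp i)$) but also be chosen so that neither coordinate sequence accidentally satisfies the $1$-dimensional Leja condition. The choice $(e^{i\pi/4},-e^{i\pi/4})$ is tailored to meet both requirements at once, and it is this calibration, rather than any deep structural argument, that constitutes the main (modest) difficulty of the proof.
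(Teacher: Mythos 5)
Your proof is correct and follows essentially the same route as the paper: the identical choice of $H_1=(1,1)$, $H_2=(-1,-1)$, $H_3=(e^{i\pi/4},-e^{i\pi/4})$, the same computation of $\vdm(H_1,(z,w))=w-1$ and $\vdm(H_1,H_2,(z,w))=2(w-z)$, the same recursive completion via the maximization condition, and the same contradictions ($\eta_0=1=-1$ for non-intertwining, and $(1,-1,\pm e^{i\pi/4})$ failing to be $3$-Leja sections for the components). You in fact supply slightly more detail than the paper on where the suprema are attained and why the component sections fail.
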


\begin{remark}

Nevertheless, we have the following question: 
if $\left(H_n\right)_{n\geq1}$ is a (nultidimensional) Leja sequence
such that the shifted sequence
$\left(H_n\right)_{n\geq n_0}$ is an intertwining one (where
$n_0\geq2$), can it be written as an intertwining sequence of
(one-dimensional) Leja ones?

The answer may be negative since the shifted sequence of a Leja sequence
is not a Leja one any more (if we consider
$\left(\eta_i\right)_{i\geq0}$ from~(\ref{explicitleja}),
then $\left(\eta_i\right)_{i\geq1}$ is not a Leja sequence
since $\left(-1,i\right)$ is not a $2$-Leja section).

\end{remark}

\section{Some estimates for the Lagrange interpolation on
bidimensional intertwining Leja sequences}\label{appl2var}

\subsection{Explicit formulas for the Lagrange polynomials on intertwining sequences}

For any given $N\geq1$, we remind from Subsection~\ref{appl2varintro} in the Introduction
the numbers $d$ and $m$ with $0\leq m\leq d$, the space
$\mathcal{P}_N\subset\C_{d}[z,w]$
spanned by $\C_{d-1}[z,w]$ and the monomials
$w^d,zw^{d-1},\ldots,z^mw^{d-m}$, and the set
$\Omega_N=\left\{H_k\right\}_{1\leq k\leq N}
=
\left\{
\left(\eta_0,\theta_0\right),\ldots,
\left(\eta_{d-1},\theta_0\right),
\left(\eta_0,\theta_d\right),
\ldots,\left(\eta_m,\theta_{d-m}\right)
\right\}$ for fixed sequences
$\left(\eta_k\right)_{k\geq0}$ and $\left(\theta_l\right)_{l\geq0}$
of pairwise distinct elements (that are not necessarily Leja sequences throughout this subsection).
We give the proof of the following result that has been mentioned
in the Introduction and that gives an explicit formula for the bidimensional
fundamental Lagrange polynomials (FLIPs) associated with 
$\mathcal{P}_N$ and $\Omega_N$.

\begin{proposition}\label{explicitlagrangepol}

Let $N\geq1$ and consider the associated $n$, $m$, $\mathcal{P}_N$ and $\Omega_N$.
Then the multivariate fundamental Lagrange polynomials
$l_{H_k}^{(N)}$ exist, i.e. for all $k=1,\ldots,N$,
$\exists\;l_{H_k}^{(N)}\in\mathcal{P}_N$ that satisfies
$l_{H_k}^{(N)}\left(H_l\right)=\delta_{k,l}$
for all $l=1,\ldots,N$.
In addition, for $k=1,\ldots,N$ let
$p,\,q$ be such that $H_k=\left(\eta_p,\theta_q\right)$. We have
for all $(z,w)\in\C^2$:

\begin{itemize}

\item

if $p+q=d$, or $p+q=d-1$ and $p\geq m+1$,
then
\begin{eqnarray}\label{nn-1m+1}
l_{(\eta_p,\theta_q)}^{(N)}(z,w)
& = &
\prod_{i=0}^{p-1}\dfrac{z-\eta_i}{\eta_p-\eta_i}
\,\times\,
\prod_{j=0}^{q-1}\dfrac{w-\theta_j}{\theta_q-\theta_j}
\,;
\end{eqnarray}

\item

if $p+q=d-1$ and $p=m$, then 
\begin{eqnarray}\label{n-1m}
& &
l_{(\eta_m,\theta_{d-m-1})}^{(N)}(z,w)
\;=\;
\prod_{i=0}^{m-1}\dfrac{z-\eta_i}{\eta_{m}-\eta_i}
\,\times\,
\prod_{j=0,j\neq {d-m-1}}^{d-m}\dfrac{w-\theta_j}{\theta_{d-m-1}-\theta_j}
\,;
\end{eqnarray}

\item

if $p+q=d-1$ and $0\leq p\leq m-1$, 
then
\begin{eqnarray}\label{n-1m-1}
l_{(\eta_p,\theta_q)}^{(N)}(z,w)
& = &
\;\;\;\;\;\;\;\;\;\;\;\;\;\;\;\;\;\;\;\;\;\;\;\;\;\;\;\;\;\;\;\;\;\;\;\;
\;\;\;\;\;\;\;\;\;\;\;\;\;\;\;\;\;\;\;\;\;\;\;\;\;\;\;\;\;\;\;\;\;\;\;\;
\;\;\;\;\;\;\;\;\;\;\;\;\;\;\;\;\;\;\;\;\;\;\;\;\;\;\;\;\;\;\;\;\;\;\;\;
\end{eqnarray}
\begin{eqnarray*}
\;\;\;\;\;\;\;
& = &
\prod_{i=0,i\neq p}^{p+1}\dfrac{z-\eta_i}{\eta_p-\eta_i}
\,
\prod_{j=0}^{q-1}\dfrac{w-\theta_j}{\theta_q-\theta_j}
\,-\,
\prod_{i=0}^{p-1}\dfrac{z-\eta_i}{\eta_p-\eta_i}
\,
\prod_{j=0}^{q-1}\dfrac{w-\theta_j}{\theta_q-\theta_j}
\,+\,
\prod_{i=0}^{p-1}\dfrac{z-\eta_i}{\eta_p-\eta_i}
\,
\prod_{j=0,j\neq q}^{q+1}\dfrac{w-\theta_j}{\theta_q-\theta_j}
\,;
\end{eqnarray*}

\item

if $0 \leq p+q\leq d-2$, $0\leq p\leq m-1$ and $0\leq q\leq d-m-1$, then
\begin{eqnarray}\label{n-2m-1n-m-1}
l_{(\eta_p,\theta_q)}^{(N)}(z,w)
& = &
\;\;\;\;\;\;\;\;\;\;\;\;\;\;\;\;\;\;\;\;\;\;\;\;\;\;\;\;\;\;\;\;\;\;\;\;
\;\;\;\;\;\;\;\;\;\;\;\;\;\;\;\;\;\;\;\;\;\;\;\;\;\;\;\;\;\;\;\;\;\;\;\;
\;\;\;\;\;\;\;\;\;\;\;\;\;\;\;\;\;\;\;\;\;\;\;\;\;\;\;\;\;\;\;\;\;\;\;\;
\end{eqnarray}
\begin{eqnarray*}
& = &
\prod_{i=0}^{p-1}\dfrac{z-\eta_i}{\eta_p-\eta_i}
\prod_{j=0,j\neq q}^{d-p}\dfrac{w-\theta_j}{\theta_q-\theta_j}
-
\prod_{i=0}^{p-1}\dfrac{z-\eta_i}{\eta_p-\eta_i}
\prod_{j=0,j\neq q}^{d-p-1}\dfrac{w-\theta_j}{\theta_q-\theta_j}
+
\prod_{i=0,i\neq p}^{p+1}\dfrac{z-\eta_i}{\eta_p-\eta_i}
\prod_{j=0,j\neq q}^{d-p-1}\dfrac{w-\theta_j}{\theta_q-\theta_j}
\\
& &
+\;
\sum_{r=1}^{m-p-1}
\left[
\prod_{i=0,i\neq p}^{p+r+1}\dfrac{z-\eta_i}{\eta_p-\eta_i}
\prod_{j=0,j\neq q}^{d-p-r-1}\dfrac{w-\theta_j}{\theta_q-\theta_j}
\;-\;
\prod_{i=0,i\neq p}^{p+r}\dfrac{z-\eta_i}{\eta_p-\eta_i}
\prod_{j=0,j\neq q}^{d-p-r-1}\dfrac{w-\theta_j}{\theta_q-\theta_j}
\right]
\\
& &
+\;
\sum_{r=m-p}^{d-p-q-2}
\left[
\prod_{i=0,i\neq p}^{p+r+1}\dfrac{z-\eta_i}{\eta_p-\eta_i}
\prod_{j=0,j\neq q}^{d-p-r-2}\dfrac{w-\theta_j}{\theta_q-\theta_j}
\;-\;
\prod_{i=0,i\neq p}^{p+r}\dfrac{z-\eta_i}{\eta_p-\eta_i}
\prod_{j=0,j\neq q}^{d-p-r-2}\dfrac{w-\theta_j}{\theta_q-\theta_j}
\right]
\,;
\end{eqnarray*}

\item

if $0 \leq p+q\leq d-2$, $0\leq p\leq m-1$ and $q\geq d-m$, then
\begin{eqnarray}\label{n-2m-1n-m}
l_{(\eta_p,\theta_q)}^{(N)}(z,w)
& = &
\;\;\;\;\;\;\;\;\;\;\;\;\;\;\;\;\;\;\;\;\;\;\;\;\;\;\;\;\;\;\;\;\;\;\;\;
\;\;\;\;\;\;\;\;\;\;\;\;\;\;\;\;\;\;\;\;\;\;\;\;\;\;\;\;\;\;\;\;\;\;\;\;
\;\;\;\;\;\;\;\;\;\;\;\;\;\;\;\;\;\;\;\;\;\;\;\;\;\;\;\;\;\;\;\;\;\;\;\;
\end{eqnarray}
\begin{eqnarray*}
& = &
\prod_{i=0}^{p-1}\dfrac{z-\eta_i}{\eta_p-\eta_i}
\prod_{j=0,j\neq q}^{d-p}\dfrac{w-\theta_j}{\theta_q-\theta_j}
-
\prod_{i=0}^{p-1}\dfrac{z-\eta_i}{\eta_p-\eta_i}
\prod_{j=0,j\neq q}^{d-p-1}\dfrac{w-\theta_j}{\theta_q-\theta_j}
+
\prod_{i=0,i\neq p}^{p+1}\dfrac{z-\eta_i}{\eta_p-\eta_i}
\prod_{j=0,j\neq q}^{d-p-1}\dfrac{w-\theta_j}{\theta_q-\theta_j}
\\
& &
+\;
\sum_{r=1}^{d-p-q-1}
\left[
\prod_{i=0,i\neq p}^{p+r+1}\dfrac{z-\eta_i}{\eta_p-\eta_i}
\prod_{j=0,j\neq q}^{d-p-r-1}\dfrac{w-\theta_j}{\theta_q-\theta_j}
\;-\;
\prod_{i=0,i\neq p}^{p+r}\dfrac{z-\eta_i}{\eta_p-\eta_i}
\prod_{j=0,j\neq q}^{d-p-r-1}\dfrac{w-\theta_j}{\theta_q-\theta_j}
\right]
\,;
\end{eqnarray*}

\item
if $p+q\leq d-2$ and $p=m$, then 
\begin{eqnarray}\label{n-2m}
l_{(\eta_p,\theta_q)}^{(N)}(z,w)
\;=\;
l_{(\eta_m,\theta_q)}^{(N)}(z,w)
& = &
\;\;\;\;\;\;\;\;\;\;\;\;\;\;\;\;\;\;\;\;\;\;\;\;\;\;\;\;\;\;\;\;\;\;\;\;
\;\;\;\;\;\;\;\;\;\;\;\;\;\;\;\;\;\;\;\;\;\;\;\;\;\;\;\;\;\;\;\;\;\;\;\;
\;\;\;\;\;\;\;\;
\end{eqnarray}
\begin{eqnarray*}
& = &
\prod_{i=0}^{m-1}\dfrac{z-\eta_i}{\eta_m-\eta_i}
\prod_{j=0,j\neq q}^{d-m}\dfrac{w-\theta_j}{\theta_q-\theta_j}
-
\prod_{i=0}^{m-1}\dfrac{z-\eta_i}{\eta_m-\eta_i}
\prod_{j=0,j\neq q}^{d-m-2}\dfrac{w-\theta_j}{\theta_q-\theta_j}
+
\prod_{i=0,i\neq m}^{m+1}\dfrac{z-\eta_i}{\eta_m-\eta_i}
\prod_{j=0,j\neq q}^{d-m-2}\dfrac{w-\theta_j}{\theta_q-\theta_j}
\\
& &
+\;
\sum_{r=1}^{d-m-q-2}
\left[
\prod_{i=0,i\neq m}^{m+r+1}\dfrac{z-\eta_i}{\eta_m-\eta_i}
\prod_{j=0,j\neq q}^{d-m-r-2}\dfrac{w-\theta_j}{\theta_q-\theta_j}
\;-\;
\prod_{i=0,i\neq m}^{m+r}\dfrac{z-\eta_i}{\eta_m-\eta_i}
\prod_{j=0,j\neq q}^{d-m-r-2}\dfrac{w-\theta_j}{\theta_q-\theta_j}
\right]
\,;
\end{eqnarray*}

\item

if $p+q\leq d-2$ and $p\geq m+1$, then 
\begin{eqnarray}\label{n-2m+1}
l_{(\eta_p,\theta_q)}^{(N)}(z,w)
& = &
\;\;\;\;\;\;\;\;\;\;\;\;\;\;\;\;\;\;\;\;\;\;\;\;\;\;\;\;\;\;\;\;\;\;\;\;
\;\;\;\;\;\;\;\;\;\;\;\;\;\;\;\;\;\;\;\;\;\;\;\;\;\;\;\;\;\;\;\;\;\;\;\;
\;\;\;\;\;\;\;\;\;\;\;\;\;\;\;\;\;\;\;\;\;\;\;\;\;\;\;\;\;\;\;\;\;\;\;\;
\end{eqnarray}
\begin{eqnarray*}
& = &
\prod_{i=0}^{p-1}\dfrac{z-\eta_i}{\eta_p-\eta_i}
\prod_{j=0,j\neq q}^{d-p-1}\dfrac{w-\theta_j}{\theta_q-\theta_j}
-
\prod_{i=0}^{p-1}\dfrac{z-\eta_i}{\eta_p-\eta_i}
\prod_{j=0,j\neq q}^{d-p-2}\dfrac{w-\theta_j}{\theta_q-\theta_j}
+
\prod_{i=0,i\neq p}^{p+1}\dfrac{z-\eta_i}{\eta_p-\eta_i}
\prod_{j=0,j\neq q}^{d-p-2}\dfrac{w-\theta_j}{\theta_q-\theta_j}
\\
& &
+\;
\sum_{r=1}^{d-p-q-2}
\left[
\prod_{i=0,i\neq p}^{p+r+1}\dfrac{z-\eta_i}{\eta_p-\eta_i}
\prod_{j=0,j\neq q}^{d-p-r-2}\dfrac{w-\theta_j}{\theta_q-\theta_j}
\;-\;
\prod_{i=0,i\neq p}^{p+r}\dfrac{z-\eta_i}{\eta_p-\eta_i}
\prod_{j=0,j\neq q}^{d-p-r-2}\dfrac{w-\theta_j}{\theta_q-\theta_j}
\right]
\,.
\end{eqnarray*}

\end{itemize}

\end{proposition}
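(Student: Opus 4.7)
The strategy is to exploit the unisolvence of $\Omega_N$ for $\mathcal{P}_N$, which was established in Remark~\ref{OmegaNunisolvent} as a byproduct of Proposition~\ref{explicitVDM}. This guarantees that the FLIPs exist and are unique, so for each node $H_k=(\eta_p,\theta_q)$ it is enough to verify that the candidate polynomial $L_{p,q}(z,w)$ on the right-hand side of the corresponding formula $(i)$ belongs to $\mathcal{P}_N$ and $(ii)$ satisfies $L_{p,q}(\eta_a,\theta_b)=\delta_{(p,q),(a,b)}$ at every node of $\Omega_N$.

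First I would handle the membership step by a direct bidegree audit. Every summand is a tensor product of a polynomial in $z$ alone and one in $w$ alone, hence has a well-defined bidegree; I would check that either the total degree is at most $d-1$ (so the summand lies in $\C_{d-1}[z,w]\subset\mathcal{P}_N$) or the bidegree belongs to the admissible set $\{(0,d),(1,d-1),\ldots,(m,d-m)\}$ corresponding to the additional monomials spanning $\mathcal{P}_N$. The seven-way case split in the statement mirrors exactly the geometry of $\Omega_N$: nodes on the top diagonal $p+q=d$, nodes on the boundary $p+q=d-1$ (subdivided according to $p\leq m-1$, $p=m$, $p\geq m+1$), and interior nodes (again split by the position of $p$ relative to~$m$).

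The interpolation step is the heart of the proof. For the value at the target node $(\eta_p,\theta_q)$ in the telescoping cases~(\ref{n-1m-1})--(\ref{n-2m+1}), I would use the algebraic identity
\begin{eqnarray*}
\prod_{i=0,\,i\neq p}^{p+r+1}\frac{z-\eta_i}{\eta_p-\eta_i}-\prod_{i=0,\,i\neq p}^{p+r}\frac{z-\eta_i}{\eta_p-\eta_i}
& = &
\frac{z-\eta_p}{\eta_p-\eta_{p+r+1}}\prod_{i=0,\,i\neq p}^{p+r}\frac{z-\eta_i}{\eta_p-\eta_i}\,,
\end{eqnarray*}
which shows that every bracketed difference carries a factor $(z-\eta_p)$ and hence vanishes at $z=\eta_p$; only the three leading terms then contribute at $(\eta_p,\theta_q)$ and they evaluate to $1-1+1=1$. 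For the vanishing at $(\eta_a,\theta_b)\neq(\eta_p,\theta_q)$, I would partition the remaining nodes by position relative to $(p,q)$: when $a<p$ the common factor $\prod_{i=0}^{p-1}(z-\eta_i)/(\eta_p-\eta_i)$ kills every summand; when $a\geq p+1$ or $b\neq q$ one tracks which specific index $r$ in the telescope makes the $z$-product or $w$-product reach $\eta_a$ or $\theta_b$, at which point the two consecutive terms in the bracket cancel against each other. The simpler formulas~(\ref{nn-1m+1}) and~(\ref{n-1m}) follow directly from the one-dimensional FLIP vanishing, once one notices that the exceptional node $(\eta_m,\theta_{d-m-1})$ forces the $w$-product to run up to $d-m$ in order to kill the corner $(\eta_m,\theta_{d-m})$ which is the only non-triangular node with $a\geq p$ and $b\geq q$.

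The main obstacle will be the combinatorial bookkeeping in the telescoping cases: for each of the four subcases one must independently verify the cancellation pattern across all ranges of $a$ and $b$ in the staircase, distinguishing in particular $b\leq d-a-1$ from $b=d-a$ (top diagonal) and $a\leq m$ from $a\geq m+1$ (boundary of the admissible monomial set). No deep new idea is required beyond the telescoping identity above, but the argument is inherently case-heavy because the shape of $\Omega_N$ transitions between a filled triangle and a staircase precisely at $p+q=d-1$, and every formula must respect this transition both in its degree profile and in its vanishing structure.
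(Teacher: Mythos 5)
Your proposal is correct and follows essentially the same route as the paper's proof (Lemma~\ref{prexplicitlagrangepol}): verify that each candidate lies in $\mathcal{P}_N$ by a degree audit (total degree $\leq d$, with $\deg_z\leq m$ for the degree-$d$ summands) and satisfies the Kronecker delta property on $\Omega_N$ via divisibility by the common factors, telescoping at the target node, and an $r$-by-$r$ tracking of which factor vanishes at the remaining nodes, uniqueness then coming from unisolvence. Your explicit identity extracting the factor $(z-\eta_p)$ from each bracketed difference is a slightly cleaner way to organize the same cancellation the paper gets by evaluating at $z=\eta_p$ directly; the only minor imprecision is that at the off-target nodes with $a\geq p+1$, $b\geq q+1$ the two products in each bracket vanish individually (the $w$-factor for small $r$, both $z$-factors once $a\leq p+r$) rather than cancelling against each other, but your bookkeeping plan recovers exactly this.
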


The proof of this proposition is an application of the following lemma.

\begin{lemma}\label{prexplicitlagrangepol}

For all $\left(\eta_p,\theta_q\right)\in\Omega_N$,
the function that appears in the claimed equality~(\ref{nn-1m+1})
(resp., (\ref{n-1m}), (\ref{n-1m-1}), (\ref{n-2m-1n-m-1}), (\ref{n-2m-1n-m}), (\ref{n-2m})
and~(\ref{n-2m+1})) satisfies the required properties for the FLIP $l_{(\eta_p,\theta_q)}^{(N)}$:
\begin{enumerate}

\item\label{prexplicitlagrangepol1}

$l_{(\eta_p,\theta_q)}^{(N)}
\;\in\;
\mathcal{P}_N$\,;

\item\label{prexplicitlagrangepol2}

$l_{(\eta_p,\theta_q)}^{(N)}\left(\eta_k,\theta_l\right)
\;=\;
\delta_{p,k}\,\delta_{q,l}
\;,\;
\forall\,\left(\eta_k,\theta_l\right)\in\Omega_N$.

\end{enumerate}

\end{lemma}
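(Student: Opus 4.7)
The plan is to verify the two claims (\ref{prexplicitlagrangepol1}) and (\ref{prexplicitlagrangepol2}) separately for each of the seven displayed formulas. Since Remark \ref{OmegaNunisolvent} already guarantees that $\Omega_N$ is unisolvent for $\mathcal{P}_N$, existence and uniqueness of the FLIPs are given; any polynomial in $\mathcal{P}_N$ satisfying the Kronecker condition $l(\eta_k,\theta_l)=\delta_{(p,q),(k,l)}$ must coincide with $l_{(\eta_p,\theta_q)}^{(N)}$. So it is enough to check, for each displayed expression, that it lies in $\mathcal{P}_N$ and that it takes the correct values at every $(\eta_k,\theta_l)\in\Omega_N$.

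For claim (\ref{prexplicitlagrangepol1}), I would compute the total degree of each summand. The admissibility criterion is that either the term has total degree at most $d-1$ (so it lies in $\C_{d-1}[z,w]\subset\mathcal{P}_N$), or it has total degree exactly $d$ with $z$-degree at most $m$, in which case its top monomial belongs to the admissible list $\{w^d,zw^{d-1},\ldots,z^m w^{d-m}\}$. Reading off the index ranges of each summand makes this bookkeeping routine across all seven cases.

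For claim (\ref{prexplicitlagrangepol2}), the two simple product formulas (\ref{nn-1m+1}) and (\ref{n-1m}) are essentially tensor products of one-variable Lagrange basis elements, and the Kronecker property follows immediately from the presence of a vanishing factor $(\eta_k-\eta_k)$ or $(\theta_l-\theta_l)$ whenever $(k,l)\neq(p,q)$ together with a direct evaluation at $(\eta_p,\theta_q)$ itself. For the summation formulas (\ref{n-1m-1})--(\ref{n-2m+1}), I would introduce the shorthand
\[
U_{a,b}(z,w)\;=\;\prod_{i=0,\,i\neq p}^{p+a}\frac{z-\eta_i}{\eta_p-\eta_i}\;\cdot\;\prod_{j=0,\,j\neq q}^{b}\frac{w-\theta_j}{\theta_q-\theta_j}
\]
(with the convention that for $a\leq 0$ the factor $i=p$ is vacuously excluded and the $z$-product reduces to $\prod_{i=0}^{p-1}$), and rewrite each formula as a telescoping sum of differences $U_{r+1,b(r)}-U_{r,b(r)}$. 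When evaluated at a test point $(\eta_k,\theta_l)\in\Omega_N$, most summands vanish because some factor $(\eta_k-\eta_i)$ or $(\theta_l-\theta_j)$ in the range is zero; the remaining terms telescope, leaving at most a boundary contribution which one checks is $0$ when $(k,l)\neq(p,q)$ and $1$ when $(k,l)=(p,q)$.

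The main obstacle will be the verification on the partial top row $\{(\eta_0,\theta_d),\ldots,(\eta_m,\theta_{d-m})\}$, where $l=d-k$ is large enough that no automatic vanishing factor $(\theta_l-\theta_l)$ appears inside the inner $w$-product. Here the telescoping must be exploited explicitly: the bracketed sums collapse to a difference of boundary $U_{a,b}$'s, and one uses the remaining factor structure to see cancellation. This is precisely why cases (\ref{n-2m-1n-m-1}) and (\ref{n-2m-1n-m}) split at the transition $r=m-p$, and why case (\ref{n-2m}) (with $p=m$) needs its own boundary checks involving the vertex $(\eta_m,\theta_{d-m})$. Once the telescoping structure is set up, the rest of the verification reduces to careful but routine index bookkeeping, organized by the position of $(p,q)$ relative to the corner $(m,d-m)$ of $\Omega_N$.
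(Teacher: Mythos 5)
Your proposal follows essentially the same route as the paper's proof: the same degree bookkeeping (total degree at most $d$, with $z$-degree at most $m$ for the terms of top degree) for membership in $\mathcal{P}_N$, and for the Kronecker property the same combination of locating a vanishing linear factor in each summand at most test points and telescoping the bracketed sums at the points where no such factor appears, with the hardest case~(\ref{n-2m-1n-m-1}) singled out and the others treated as similar or easier. The only cosmetic difference is that the paper first factors out $\prod_{i=0}^{p-1}(z-\eta_i)$ and $\prod_{j=0}^{q-1}(w-\theta_j)$ to dispose of all points with $k<p$ or $l<q$, and then treats the three remaining situations ($z=\eta_p$ fixed, $w=\theta_q$ fixed, and $k\geq p+1$ with $l\geq q+1$) separately, which is exactly your boundary-contribution analysis in slightly different clothing.
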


\begin{proof}

First, all the involved polynomials are well-defined since the $\eta_i$'s
(resp., $\theta_j$'s) are supposed to be pairwise distinct.
Next, we will only deal with formula~(\ref{n-2m-1n-m-1}) since the proofs
of ~(\ref{nn-1m+1}), (\ref{n-1m}), (\ref{n-1m-1}),
(\ref{n-2m-1n-m}), (\ref{n-2m}) and~(\ref{n-2m+1}) are similar (else easier).

Let $p$, $q$ be positive integers with $p+q\leq d-2$, $p\leq m-1$ and $q\leq d-m-1$.
We first want to prove that the involved polynomial belongs to
$\mathcal{P}_N$, i.e. it has total degree at most $d$, and the 
products whose total degree equals $d$ must have partial degree at most
$m$ with respect to $z$.
Here, all these products have total degree
at most $d$. Next, one has
$\deg_z\left(
\prod_{i=0}^{p-1}\dfrac{z-\eta_i}{\eta_p-\eta_i}
\prod_{j=0,j\neq q}^{d-p}\dfrac{w-\theta_j}{\theta_q-\theta_j}
\right)=p\leq m-1$.
Similarly, 
$\deg_z\left(
\prod_{i=0,i\neq p}^{p+1}\dfrac{z-\eta_i}{\eta_p-\eta_i}
\prod_{j=0,j\neq q}^{d-p-1}\dfrac{w-\theta_j}{\theta_q-\theta_j}
\right)=p+1\leq m$,
and
for all $r=1,\ldots,m-p-1$ (in case $m\geq p+2$),
$\deg_z
\left(
\prod_{i=0,i\neq p}^{p+r+1}\dfrac{z-\eta_i}{\eta_p-\eta_i}
\prod_{j=0,j\neq q}^{d-p-r-1}\dfrac{w-\theta_j}{\theta_q-\theta_j}
\right)
=p+r+1\leq m$.
All the remaining products have total degree at most $d-1$, then this proves
part~(\ref{prexplicitlagrangepol1}).

\medskip

Now we prove part~(\ref{prexplicitlagrangepol2}).
First, one has $d-p>d-p-1\geq q+1>q$. 
On the other hand,
for all $r=1,\ldots,m-p-1$ (in case $m-p\geq2$, otherwise the associated sum does not
even appear), one has 
$d-p-r-1\geq d-m\geq q+1>q$. Similarly, 
for all $r=m-p,\ldots,d-p-q-2$ (in case $q\leq d-m-2$), one has 
$d-p-r-2\geq q>q-1$. 
It follows that the expression in~(\ref{n-2m-1n-m-1}) is divisible by
$\prod_{j=0}^{q-1}\dfrac{w-\theta_j}{\theta_q-\theta_j}$.
We similarly check that it is also divisible by 
$\prod_{i=0}^{p-1}\dfrac{z-\eta_i}{\eta_p-\theta_i}$.
Then it cancels all the points
$\left(\eta_k,\theta_l\right)$ with $0\leq k\leq p-1$ or $0\leq l\leq q-1$.
Thus it is sufficient to check~(\ref{prexplicitlagrangepol2})
for all $\left(\eta_k,\theta_l\right)\in\Omega_N$ with
$k\geq p$ and $l\geq q$.

Next, if we fix $z=\eta_p$, the expression in~(\ref{n-2m-1n-m-1}) becomes
\begin{eqnarray*}
\prod_{j=0,j\neq q}^{d-p}\dfrac{w-\theta_j}{\theta_q-\theta_j}
-
\prod_{j=0,j\neq q}^{d-p-1}\dfrac{w-\theta_j}{\theta_q-\theta_j}
+
\prod_{j=0,j\neq q}^{d-p-1}\dfrac{w-\theta_j}{\theta_q-\theta_j}
\;\;\;\;\;\;\;\;\;\;\;\;\;\;\;\;\;\;\;\;\;\;\;\;\;\;\;\;\;\;\;\;\;\;\;\;
\;\;\;\;\;\;\;\;\;\;\;\;\;\;\;\;\;\;\;\;\;\;\;\;\;\;\;\;\;\;\;\;\;\;\;\;
\;\;\;\;\;\;\;
\\
+
\sum_{r=1}^{m-p-1}
\left[
\prod_{j=0,j\neq q}^{d-p-r-1}\dfrac{w-\theta_j}{\theta_q-\theta_j}
-
\prod_{j=0,j\neq q}^{d-p-r-1}\dfrac{w-\theta_j}{\theta_q-\theta_j}
\right]
+\;
\sum_{r=m-p}^{d-p-q-2}
\left[
\prod_{j=0,j\neq q}^{d-p-r-2}\dfrac{w-\theta_j}{\theta_q-\theta_j}
-
\prod_{j=0,j\neq q}^{d-p-r-2}\dfrac{w-\theta_j}{\theta_q-\theta_j}
\right]
\;=\;
\end{eqnarray*}
\begin{eqnarray*}
\;=\;
\prod_{j=0,j\neq q}^{d-p}\dfrac{w-\theta_j}{\theta_q-\theta_j}
+\sum_{r=1}^{m-p-1}(0)
+\sum_{r=m-p}^{d-p-q-2}(0)
\;=\;
\prod_{j=0,j\neq q}^{d-p}\dfrac{w-\theta_j}{\theta_q-\theta_j}
& &
\qquad\qquad\qquad\qquad\qquad\qquad\qquad
\end{eqnarray*}
(notice that the above equalities hold if $p=m-1$ or $q=d-m-1$).\\
We first get $1$ for $w=\theta_q$.
Next, if $w=\theta_l$ with $l\geq q+1$, since 
$p+l\leq d$, we have $q+1\leq l\leq d-p$ and the above product vanishes.

Now if we fix $w=\theta_q$ in~(\ref{n-2m-1n-m-1}),
this gives
\begin{eqnarray*}
& &
\prod_{i=0}^{p-1}\dfrac{z-\eta_i}{\eta_p-\eta_i}
-
\prod_{i=0}^{p-1}\dfrac{z-\eta_i}{\eta_p-\eta_i}
+
\prod_{i=0,i\neq p}^{p+1}\dfrac{z-\eta_i}{\eta_p-\eta_i}
\\
& &
+
\sum_{r=1}^{m-p-1}
\left[
\prod_{i=0,i\neq p}^{p+r+1}\dfrac{z-\eta_i}{\eta_p-\eta_i}
\;-\;
\prod_{i=0,i\neq p}^{p+r}\dfrac{z-\eta_i}{\eta_p-\eta_i}
\right]
+
\sum_{r=m-p}^{d-p-q-2}
\left[
\prod_{i=0,i\neq p}^{p+r+1}\dfrac{z-\eta_i}{\eta_p-\eta_i}
\;-\;
\prod_{i=0,i\neq p}^{p+r}\dfrac{z-\eta_i}{\eta_p-\eta_i}
\right]
\;=
\end{eqnarray*}
\begin{eqnarray*}
& = &
\prod_{i=0,i\neq p}^{p+1}\dfrac{z-\eta_i}{\eta_p-\eta_i}
+
\sum_{r=1}^{d-p-q-2}
\left[
\prod_{i=0,i\neq p}^{p+r+1}\dfrac{z-\eta_i}{\eta_p-\eta_i}
\;-\;
\prod_{i=0,i\neq p}^{p+r}\dfrac{z-\eta_i}{\eta_p-\eta_i}
\right]
\\
& = &
\prod_{i=0,i\neq p}^{p+1}\dfrac{z-\eta_i}{\eta_p-\eta_i}
+
\prod_{i=0,i\neq p}^{d-q-1}\dfrac{z-\eta_i}{\eta_p-\eta_i}
-
\prod_{i=0,i\neq p}^{p+1}\dfrac{z-\eta_i}{\eta_p-\eta_i}
\;=\;
\prod_{i=0,i\neq p}^{d-q-1}\dfrac{z-\eta_i}{\eta_p-\eta_i}
\end{eqnarray*}
(once again, the above equalities hold if $p=m-1$ or $q=d-m-1$).\\
We first get $1$ for $z=\eta_p$. 
Next, let fix $z=\eta_k$ with $k\geq p+1$. 
If $k+q\leq d-1$, then $p+1\leq k\leq d-q-1$
and the above product vanishes. Otherwise
$k+q=d$ then $k\leq m$ (since 
$\left(\eta_k,\theta_q\right)$ must belong to $\Omega_N$)
and $q=d-k\geq d-m$. This is incompatible with the condition that
$q\leq d-m-1$.

The remaining case is the one for which
$(z,w)=\left(\eta_k,\theta_l\right)\in\Omega_{N}$ with
$k\geq p+1$ and $l\geq q+1$.
Since $k+l\leq d$, one has $l\leq d-k\leq d-p-1$ (with $l\geq q+1$),
then 
$\prod_{j=0,j\neq q}^{d-p-1}\dfrac{\theta_l-\theta_j}{\theta_q-\theta_j}=0$
and the first three terms in~(\ref{n-2m-1n-m-1}) vanish.
Next, let be $r$ with $1\leq r\leq m-p-1$ (we can assume 
$m-p\geq2$, otherwise the first sum disappears). If
$l\leq d-p-r-1$, then
$\prod_{j=0,j\neq q}^{d-p-r-1}\dfrac{\theta_l-\theta_j}{\theta_q-\theta_j}=0$
(since $l\geq q+1$).
Otherwise $l\geq d-p-r$ then $k\leq d-l\leq p+r$ (with $k\geq p+1$),
and 
$\prod_{i=0,i\neq p}^{p+r}\dfrac{\eta_k-\eta_i}{\eta_p-\eta_i}=0$.
It follows that the first sum in~(\ref{n-2m-1n-m-1}) always vanishes.\\
Lastly, let be $r$ with $m-p\leq r\leq d-p-q-2$ (similarly, we can assume that
$d-q-2\geq m$).
If $l\leq d-p-r-2$ (with $l\geq q+1$), then
$\prod_{j=0,j\neq q}^{d-p-r-2}\dfrac{\theta_l-\theta_j}{\theta_q-\theta_j}=0$.
Otherwise $l\geq d-p-r-1$. If $k+l\leq d-1$, then
$k\leq d-l-1\leq p+r$ (with $k\geq p+1$) and
$\prod_{i=0,i\neq p}^{p+r}\dfrac{\eta_k-\eta_i}{\eta_p-\eta_i}=0$;
otherwise $k+l=d$ then necessarily $k\leq m\leq p+r$
(since $r\geq m-p$) and one still has
$\prod_{i=0,i\neq p}^{p+r}\dfrac{\eta_k-\eta_i}{\eta_p-\eta_i}=0$.
It follows that the second sum in~(\ref{n-2m-1n-m-1}) vanishes and this completes
the proof of~(\ref{n-2m-1n-m-1}).

\end{proof}

\begin{remark}\label{OmegaNunisolvent2}

These formulas could also have been deduced by using the results 
from~\cite{sauerxu1995} and~\cite{calvi2005}
where the authors give algorithms to construct them in the general case of
{\em block unisolvent arrays} in $\C^d$. Here we independently computed
them for the special case of $\Omega_N$ and $\mathcal{P}_N$ in $\C^2$.

On the other hand, we can get back from the previous lemma the following result whose proof is a
classical reasoning (and that can be also deduced from Section~\ref{applmultivar},
Proposition~\ref{explicitVDM} for the case $s=2$, see Remark~\ref{OmegaNunisolvent}): 
for all $N\geq1$,
the set $\Omega_N$ is unisolvent for $\mathcal{P}_N$. As an equivalent consequence,
for every function $f$ that is defined on $\Omega_N$, there exists a unique
polynomial $P\in\mathcal{P}_N$ such that
$P(z,w)=f(z,w)$ for every $(z,w)\in\Omega_N$.
In addition, $P$ is the multivariate Lagrange polynomial
$L_{\Omega_N}[f](z,w)
=
\sum_{n=1}^N
f\left(H_n\right)\,l_{H_n}^{(N)}(z,w)$,
where the $l_{H_n}^{(N)}$'s are the polynomials mentioned in the statement of
Proposition~\ref{explicitlagrangepol}.

\end{remark}

As an application, it follows that these polynomials $l_{H_n}^{(N)}$'s are indeed the 
required FLIPs associated with $\mathcal{P}_N$ and $\Omega_N$
(since $\Omega_N$ is unisolvent for $\mathcal{P}_N$) and this completes
the proof of Proposition~\ref{explicitlagrangepol}.

\subsection{Uniform estimates for the FLIPs on intertwining Leja sequences and applications}

First, we remind~\cite[Theorem~1.2]{irigoyen5}.

\begin{theorem}\label{unifestim}

Let $\mathcal{L}=\left(\eta_0,\eta_1,\eta_2,\ldots\right)$
be a Leja sequence for the unit disk (with $\left|\eta_0\right|=1$). Then
the FLIPs $l_k^{(N+1)}$ are uniformly
bounded with respect to $N\geq0$ and
$k=0,\ldots,N$, i.e.
\begin{eqnarray*}\label{estimate}
\sup_{N\geq k\geq0}
\left(
\sup_{z\in\overline{\D}}
\left|
l_k^{(N+1)}(z)\right|
\right)
& \leq &
\pi\exp(3\pi)
\,.
\end{eqnarray*}

\end{theorem}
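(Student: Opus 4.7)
By the maximum principle, since $l_k^{(N+1)}$ is a polynomial, it suffices to bound $\bigl|l_k^{(N+1)}(z)\bigr|$ on the unit circle $\mathbb{T}=\partial\D$. The key input is the explicit formula (\ref{explicitleja}), which reveals a dyadic self-similar structure of any Leja sequence on $\overline{\D}$: the first $2^n$ points $\eta_0,\ldots,\eta_{2^n-1}$ are precisely the $2^n$-th roots of unity (in a specific order), and $\eta_{2^n+j}=e^{i\pi/2^n}\eta_j$ for every $0\leq j<2^n$. Writing the binary expansion $N+1=2^{n_1}+\cdots+2^{n_t}$ with $n_1>\cdots>n_t\geq 0$, the Leja section $\{\eta_0,\ldots,\eta_N\}$ therefore decomposes into consecutive blocks $B_1,\ldots,B_t$ of respective cardinalities $2^{n_1},\ldots,2^{n_t}$, each of the form $B_j=\omega_j\cdot M_{n_j}$, where $M_{n_j}$ is the group of $2^{n_j}$-th roots of unity and $\omega_j\in\mathbb{T}$ is a rotation determined inductively by $n_1,\ldots,n_{j-1}$.

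Since $\prod_{\eta\in B_j}(z-\eta)=z^{2^{n_j}}-\omega_j^{2^{n_j}}$, locating $\eta_k$ in its block $B_{j^*}$ yields the factorization
\[
l_k^{(N+1)}(z)\;=\;L(z)\,\prod_{j\neq j^*}\frac{z^{2^{n_j}}-\omega_j^{2^{n_j}}}{\eta_k^{2^{n_j}}-\omega_j^{2^{n_j}}},\qquad L(z)=\prod_{\substack{\eta\in B_{j^*}\\ \eta\neq\eta_k}}\frac{z-\eta}{\eta_k-\eta}.
\]
The inner factor $L$ is the FLIP associated with the full group $B_{j^*}$; after the change of variable $\zeta=\bar\eta_k z$, it becomes the normalized Dirichlet kernel $(\zeta^{2^{n_{j^*}}}-1)/(2^{n_{j^*}}(\zeta-1))$, which is bounded by $1$ on $\mathbb{T}$ by a telescoping geometric-series estimate. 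The whole problem thus reduces to bounding the finite product over $j\neq j^*$.

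Each numerator satisfies $|z^{2^{n_j}}-\omega_j^{2^{n_j}}|\leq 2$ on $\mathbb{T}$, so the task becomes a lower bound on $\prod_{j\neq j^*}|\eta_k^{2^{n_j}}-\omega_j^{2^{n_j}}|$, equivalently an upper bound on $\sum_{j\neq j^*}\log(2/|\eta_k^{2^{n_j}}-\omega_j^{2^{n_j}}|)$. The self-similar construction of the $\omega_j$'s shows that the point $\eta_k^{2^{n_j}}$ and the point $\omega_j^{2^{n_j}}$ lie on $\mathbb{T}$ at an angular distance controlled from below by a geometric progression in $|j-j^*|$ (reflecting, at each binary level, the extremal defining property (\ref{defleja1}) of Leja sequences, which forces each newly inserted point to be at least halfway along the longest free arc). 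Inserting this lower bound yields the uniform estimate $\sum_{j\neq j^*}\log(2/|\eta_k^{2^{n_j}}-\omega_j^{2^{n_j}}|)\leq\log\pi+3\pi$, which gives the announced bound $\pi\exp(3\pi)$.

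The main obstacle is exactly this geometric-type lower bound: a naive estimate only produces a sum diverging logarithmically in $t\sim\log_2 N$ and hence a bound of order $N^{O(1)}$ rather than a uniform constant. Making the geometric decay rigorous requires a careful inductive tracking, through the binary tree, of the angular separations between $\eta_k^{2^{n_j}}$ and the accumulated phases $\omega_j^{2^{n_j}}$, which is the technical heart of the argument.
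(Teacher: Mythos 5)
First, a point of context: the paper does not prove this statement at all --- it is quoted verbatim as Theorem~1.2 of the reference \cite{irigoyen5} (``A uniform bound for the Lagrange polynomials of Leja points for the unit disk''), so there is no in-paper proof to compare against; the comparison has to be with the argument of that reference, whose strategy your sketch does broadly resemble (binary block decomposition of an $N$-Leja section into rotated complete sets of $2^{n_j}$-th roots of unity via the Bia{\l}as-Cie\.z--Calvi structure theorem, the bound $|L(z)|\leq 1$ for the within-block FLIP, and an estimate of the cross-block product).

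However, your proposal has a genuine gap, and you essentially concede it yourself: the entire difficulty of the theorem is concentrated in the claim that $\sum_{j\neq j^*}\log\bigl(2/|\eta_k^{2^{n_j}}-\omega_j^{2^{n_j}}|\bigr)\leq\log\pi+3\pi$, which you assert and then describe as ``the technical heart of the argument'' requiring ``a careful inductive tracking'' that you do not supply. Worse, the mechanism you invoke --- ``angular distance controlled from below by a geometric progression in $|j-j^*|$'' --- cannot work as stated: if the angular separation between $\eta_k^{2^{n_j}}$ and $\omega_j^{2^{n_j}}$ were merely bounded below by $c\rho^{|j-j^*|}$ with $\rho<1$, then with the numerators crudely bounded by $2$ the $j$-th factor would be of size $2/(c\rho^{|j-j^*|})$ and the product would diverge super-polynomially in $t\sim\log_2 N$. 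What is actually needed is the opposite kind of statement: each cross-block factor must be $1+O(\varepsilon_j)$ with $\sum_j\varepsilon_j$ uniformly bounded, i.e.\ the denominators $|\eta_k^{2^{n_j}}-\omega_j^{2^{n_j}}|$ must be within geometrically summable errors of the maximal value $2$ (the two points nearly antipodal), and for blocks $j>j^*$ one must in addition exploit cancellation in the numerators rather than the bound $|z^{2^{n_j}}-\omega_j^{2^{n_j}}|\leq 2$. Establishing this for an \emph{arbitrary} Leja sequence (not just the explicit one of~(\ref{explicitleja}), where your identity $\eta_{2^n+j}=e^{i\pi/2^n}\eta_j$ holds; in general the second half-block is only a rotated $2^n$-Leja section) is precisely the content of the cited paper and occupies most of it. As it stands, your text is a plan of attack, not a proof.
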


We also remind~\cite[Theorem~1.3]{irigoyen5}
that gives an estimate of
the FLIPs for compact sets whose boundary is an Alper-smooth Jordan curve.

\begin{theorem}\label{unifestimcompact}

Let $\mathcal{L}=\left(\eta_i\right)_{i\geq0}$ be a Leja sequence for the unit disk with
$\left|\eta_0\right|=1$, and let 
$\Phi(\mathcal{L})=\left(\Phi\left(\eta_i\right)\right)_{i\geq0}$
be its image by 
the conformal mapping $\Phi$. We have for all $N\geq0$, 
\begin{eqnarray*}
\max_{0\leq p\leq N}
\left[
\sup_{z\in K}
\left|
\prod_{i=0,i\neq p}^N
\dfrac{z-\Phi(\eta_i)}{\Phi(\eta_p)-\Phi(\eta_i)}
\right|
\right]
& = &
O\left(N^{2A/\ln(2)}\right)
\,,
\end{eqnarray*}
where $A$ is a positive constant depending only on $K$.

\end{theorem}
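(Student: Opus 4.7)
The plan is to transfer the estimate from $K$ back to the unit disk via the conformal map $\Phi$, and then exploit the interplay between Theorem~\ref{unifestim} (the disk FLIP bound) and the regularity of $\Phi$ coming from the Alper-smoothness of $\partial K$.

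First I would parametrize $K$ by $z = \Phi(\zeta)$ for $\zeta \in \partial\D$, using that $\Phi$ extends homeomorphically to the boundary. Introducing the divided difference $g(u,v) := (\Phi(u) - \Phi(v))/(u-v)$, one decomposes
\[
\prod_{i=0,\,i\neq p}^N \frac{\Phi(\zeta) - \Phi(\eta_i)}{\Phi(\eta_p) - \Phi(\eta_i)} \;=\; \left(\prod_{i\neq p}\frac{\zeta - \eta_i}{\eta_p - \eta_i}\right) \cdot \prod_{i\neq p}\frac{g(\zeta, \eta_i)}{g(\eta_p, \eta_i)}.
\]
The first factor is precisely the one-dimensional FLIP $l_p^{(N+1)}(\zeta)$ for the Leja sequence in $\overline{\D}$, hence uniformly bounded by $\pi\exp(3\pi)$ via Theorem~\ref{unifestim}. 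This reduces the problem to estimating the ``distortion product'' on the right.

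Next I would handle the distortion product through the regularity of $\Phi$ on $\partial\D$. For compact sets with Alper-smooth boundary, classical conformal-mapping theory yields that $\Phi'$ extends continuously to $\overline{\C}\setminus\D$ and that $\log|\Phi'|$ admits a Dini-integrable modulus of continuity depending only on $K$. Applying a mean-value-type argument to $g$ on arcs of the unit circle, one obtains pointwise bounds of the form
\[
\left|\log\frac{|g(\zeta, \eta_i)|}{|g(\eta_p, \eta_i)|}\right| \;\leq\; A\, \omega\!\left(\frac{|\zeta - \eta_p|}{|\eta_p - \eta_i| + |\zeta - \eta_i|}\right),
\]
for some constant $A$ depending only on $K$ and $\omega$ the Dini-continuous modulus above.

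Finally, I would sum these bounds using the dyadic structure of Leja sequences on the circle. The explicit description~(\ref{explicitleja}) shows that the $\eta_i$'s organize themselves on dyadic arcs of length $\sim 2^{-k}$, with $\sim 2^k$ points at level $k$. Grouping $\sum_{i\neq p} \log|g(\zeta,\eta_i)/g(\eta_p,\eta_i)|$ by dyadic scale and matching each arc against the corresponding value of $\omega$ produces a contribution of at most $2A$ per level; since there are at most $\log_2 N + O(1)$ levels, the total logarithm is $2A\log_2 N + O(1)$, which exponentiates to $O\bigl(N^{2A/\ln 2}\bigr)$ as claimed. The main obstacle is making the dyadic decomposition of the Leja sequence interlock cleanly with the Dini modulus $\omega$, in order to extract the sharp exponent $2A/\ln(2)$ rather than a weaker one; this requires both the explicit arithmetic coming from~(\ref{explicitleja}) and a quantitative form of the Alper-smoothness estimate for $\log|\Phi'|$.
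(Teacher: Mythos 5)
A preliminary remark: the paper contains no proof of Theorem~\ref{unifestimcompact} at all --- it is recalled verbatim as Theorem~1.3 of~\cite{irigoyen5}, so there is no in-paper argument to compare yours against. I will therefore assess your sketch on its own terms. Your opening move is correct and is the standard one in this circle of ideas (Alper, \cite{calviphung1}, \cite{irigoyen5}): writing $z=\Phi(\zeta)$, reducing $\sup_K$ to $\sup_{\partial K}$ by the maximum principle, and factoring each quotient through the divided difference $g(u,v)=(\Phi(u)-\Phi(v))/(u-v)$ so that the disk FLIP $l_p^{(N+1)}(\zeta)$ splits off and is absorbed by Theorem~\ref{unifestim}. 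The problem is thus correctly reduced to bounding the distortion product $\prod_{i\neq p}\bigl|g(\zeta,\eta_i)/g(\eta_p,\eta_i)\bigr|$ by $O\bigl(N^{2A/\ln 2}\bigr)$, and the target of ``a constant per binary level, $\log_2 N$ levels'' is the right shape for the exponent.

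The genuine gap is in the mechanism you propose for getting that constant per level. A pointwise bound of the form $\bigl|\log(|g(\zeta,\eta_i)|/|g(\eta_p,\eta_i)|)\bigr|\leq A\,\omega(\cdot)$, summed term by term over the $2^k$ points of the $k$-th block, does \emph{not} give an $O(1)$ contribution for a general Alper-smooth boundary. Take $\omega(t)=t^{\alpha}$ with $0<\alpha<1$ (such curves are Alper-smooth since $\int_0 t^{\alpha-1}|\ln t|\,dt<\infty$): for $\zeta$ at distance comparable to the spacing $2^{-k}$ from $\eta_p$, the $2^k$ roughly equidistributed points of level $k$ sit at distances $\sim j2^{-k}$, $j=1,\dots,2^k$, and your sum becomes $\sum_{j}\omega(1/j)\sim\sum_j j^{-\alpha}\sim 2^{k(1-\alpha)}$, which is unbounded in $k$ and would exponentiate to a superpolynomial bound. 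The point is that the true estimate is not a termwise one: one must use that each block of the binary decomposition of a Leja section for $\overline{\D}$ is a \emph{complete} (rotated) set of $2^k$-th roots of unity, and invoke the Alper cancellation lemma, which compares $\sum_{\omega}\log|g(\zeta,\omega)|$ over such a complete set to $2^k$ times the mean value $\frac{1}{2\pi}\int_0^{2\pi}\log|g(\zeta,e^{i\theta})|\,d\theta$ with an error that is $O(1)$ uniformly in $k$ and $\zeta$; it is precisely here that the full Alper condition $\int_0\omega(x)x^{-1}|\ln x|\,dx<\infty$ (and not mere Dini continuity of $\log\Phi'$) is needed. The means then cancel between numerator and denominator, leaving the advertised constant per level. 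Without this block cancellation --- i.e., without replacing your pointwise $\omega$-bound by the Alper lemma applied to each complete dyadic block --- the summation step of your argument fails, and the exponent $2A/\ln(2)$ cannot be reached.
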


Now we can 
give the proof of Proposition~\ref{unifestimintertwining} (whose immediate application
will be Theorem~\ref{lebcteintertwin}).

\begin{proof}

We begin with the proof of the first estimate. Since
$\left(\eta_i\right)_{i\geq0}$ and $\left(\theta_j\right)_{j\geq0}$
are Leja sequences for the unit disk (with
$\left|\eta_0\right|=\left|\theta_0\right|=1$), we get by an application of Theorem~\ref{unifestim} that
for all $k,l\geq0$ and $p,q$ with $0\leq p\leq k$ and $0\leq q\leq l$,
\begin{eqnarray*}
\sup_{z\in\overline{\D}}
\left|
\prod_{i=0,i\neq p}^k
\dfrac{z-\eta_i}{\eta_p-\eta_i}
\right|
\;\leq\;
\pi\exp(3\pi)
%
& \mbox{ and } &
\sup_{w\in\overline{\D}}
\left|
\prod_{j=0,j\neq q}^l
\dfrac{w-\theta_j}{\theta_q-\theta_j}
\right|
\;\leq\;
\pi\exp(3\pi)
\,.
\end{eqnarray*}
Consider $N\geq1$ and the associated numbers $d\geq0$ and $m$ with $0\leq m\leq d$.
For all $\left(\eta_p,\theta_q\right)\in\Omega_{N}$,
an application of formula~(\ref{nn-1m+1}) from Proposition~\ref{explicitlagrangepol}
(resp.,~(\ref{n-1m}), (\ref{n-1m-1}), (\ref{n-2m-1n-m-1}), (\ref{n-2m-1n-m}), (\ref{n-2m})
and~(\ref{n-2m+1})) yields the following estimate:
\begin{eqnarray*}
\sup_{(z,w)\in\overline{\D}^2}
\left|
l_{(\eta_p,\theta_q)}^{(N)}(z,w)
\right|
& \leq &
2\left(d-p-q+1\right)
\pi\exp(3\pi)\times\pi\exp(3\pi)
\,.
\end{eqnarray*}
The first part of the proposition follows 
since  by~(\ref{Nn}), $N\sim N_d\sim d^2/2$.

The proof of the second part is similar since Proposition~\ref{explicitlagrangepol} is still valid
with the points $\eta_i$'s and  $\theta_j$'s replaced with the $\Phi_1\left(\eta_i\right)$'s
and $\Phi_2\left(\theta_j\right)$'s respectively (notice that the points
$\Phi_1\left(\eta_i\right)$'s and $\Phi_2\left(\theta_j\right)$'s are well-defined
since all the $\eta_i$'s and  $\theta_j$'s belong to the unit circle).
The only difference is an application of Theorem~\ref{unifestimcompact}
instead of Theorem~\ref{unifestim} that yields
for all $k,l\geq0$ and $p,q$ with $0\leq p\leq k$ and $0\leq q\leq l$:
$\sup_{z\in K_1}
\left|
\prod_{i=0,i\neq p}^k
\dfrac{z-\Phi_1\left(\eta_i\right)}{\Phi_1\left(\eta_p\right)-\Phi_1\left(\eta_i\right)}
\right|
\leq
M_1(k+2)^{2A_1/\ln(2)}$
and
$\sup_{w\in K_2}
\left|
\prod_{j=0,j\neq q}^l
\dfrac{w-\Phi_2\left(\theta_j\right)}{\Phi_2\left(\theta_q\right)-\Phi_2\left(\theta_j\right)}
\right|
\leq
M_2(l+2)^{2A_2/\ln(2)}$, 
where $M_1$, $A_1$ and $M_2$, $A_2$ are positive constants depending only
on $K_1$ and $K_2$ respectively.
By repeating the same argument of the first part, we get for all
$\left(\eta_p,\theta_q\right)\in\Omega_{N}$:
\begin{eqnarray*}
\sup_{(z,w)\in K_1\times K_2}
\left|
l_{\left(\Phi_1\left(\eta_p\right),\Phi_2\left(\theta_q\right)\right)}^{(N)}(z,w)
\right|
\leq
2M_1M_2\left(d-p-q+1\right)
\max_{k+l\leq d}
\left[
(k+2)^{2A_1/\ln(2)}
(l+2)^{2A_2/\ln(2)}
\right]
\,.
\end{eqnarray*}
By setting $A=\max\left(A_1,A_2\right)$ and noticing that
$
\max_{k+l\leq d}(k+2)(l+2)
=\max_{k+l=d}(k+2)(l+2)
\leq\sup_{x\in[0,d]}(x+2)(d+2-x)
=\left(\dfrac{d}{2}+2\right)\left(d+2-\dfrac{d}{2}\right)
=
\left(\dfrac{d+4}{2}\right)^2$,
we get
\begin{eqnarray*}
\sup_{(z,w)\in K_1\times K_2}
\left|
l_{\left(\Phi_1\left(\eta_p\right),\Phi_2\left(\theta_q\right)\right)}^{(N)}(z,w)
\right|
& \leq &
\dfrac{2M_1M_2}{2^{4A/\ln(2)}}
(d-p-q+1)(d+4)^{4A/\ln(2)}
\,.
\end{eqnarray*}
This proves the second part of the 
proposition since by~(\ref{Nn}), $N\sim N_d\sim d^2/2$.

\end{proof}

\begin{remark}

In the statement of Theorem~\ref{lebcteintertwin}, we could get a precise bound 
for $O\left(N^{3/2}\right)$ in~(\ref{lebcteintertwinbidisc}) 
and for $O\left(N^{2A/\ln(2)+3/2}\right)$  in~(\ref{lebcteintertwincompact}) respectively,
by computing the sum 
$\sum_{\left(\eta_p,\theta_q\right)\in\Omega_N}\left(d-p-q+1\right)$ 
instead of crudely estimating it by $d\times N$. But it seems useless since 
it will not change the exponent of $d$ (or $N$) that may not be optimal.

\end{remark}

Now we deal with another application of Theorem~\ref{lebcteintertwin}
that is Corollary~\ref{appljackson}. We first remind the notations from~\cite{renwang}:
the Lipschitz space $Lip_{\gamma}\left(\overline{\D}^s\right)$, $0<\gamma\leq1$,
consists of all holomorphic functions 
$f\in\mathcal{O}\left(\D^s\right)\bigcap C\left(\overline{\D}^s\right)$ satisfying
$\left|f\left(e^{ih}\zeta\right)-f(\zeta)\right|
\leq
L\,|h|^{\gamma}$,
for any $\zeta\in\mathbb{T}^s$ (the unit torus in $\C^s$) and $h\in\R$
($L>0$ being the Lipschitz constant);
then $m\in\N$ being given,
$f\in\mathcal{O}\left(\D^s\right)\bigcap C\left(\overline{\D}^s\right)$ is said to belong to
$Lip_{\gamma}^m\left(\overline{\D}^s\right)$
if $\dfrac{\partial^{\alpha}f}{\partial z^{\alpha}}\in Lip_{\gamma}\left(\overline{\D}^s\right)$
for any $|\alpha|=\alpha_1+\cdots+\alpha_s\leq m$;
lastly, for all $d\geq0$, $\mathcal{P}_d^{(s)}$ is the space
of polynomials of total degree at most $d$
($\mathcal{P}_d^{(s)}$ means
$\mathcal{P}_N$ with
$N=\dbinom{s+d}{s}$, that is also $\C_d[z_1,\ldots,z_s]$,
see~(\ref{defPN}), (\ref{defNd}) and~(\ref{defOmegaNPNple})).
Next, we remind the reader the following result
given as Theorem~7.2 from~\cite{renwang} and that is a generalization of
Jackson's theorem in the polydisc.

\begin{theorem}\label{thm72renwang}

If $f\in Lip_{\gamma}^m\left(\overline{\D}^s\right)$ then for all $d\in\N$,
\begin{eqnarray*}
\inf_{P\in\mathcal{P}^{(s)}_d}
\left[
\sup_{z\in\overline{\D}^s}
\left|
f(z)-P(z)
\right|
\right]
& \leq &
L\dfrac{C_{m,\gamma}}{d^{m+\gamma}}
\,,
\end{eqnarray*}
where $C_{m,\gamma}$ is a positive constant depending only on $m$ and $\gamma$.

\end{theorem}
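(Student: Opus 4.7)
The plan is to adapt the classical one-dimensional Jackson theorem to the polydisc setting, exploiting both the tensor structure of $\overline{\D}^s$ and the holomorphicity of $f$. Since $f\in\mathcal{O}\left(\D^s\right)\bigcap C\left(\overline{\D}^s\right)$, it admits a Taylor expansion $f(z)=\sum_{\alpha\in\N^s}\widehat f(\alpha)z^{\alpha}$ whose coefficients coincide with the Fourier coefficients of the boundary restriction $f|_{\mathbb{T}^s}$. The hypothesis $f\in Lip_{\gamma}^m\left(\overline{\D}^s\right)$, applied to each partial derivative $\partial^{\alpha}f/\partial z^{\alpha}$ with $|\alpha|\leq m$, translates by iterated integration by parts on $\mathbb{T}^s$ into decay of these Fourier coefficients of order $(1+|\alpha|)^{-(m+\gamma)}$, with a constant proportional to the Lipschitz constant $L$.

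Rather than summing partial Taylor series directly (which in several variables does not give the optimal rate, due to the volume of multi-indices near $|\alpha|=d$), I would build the approximant via a multivariate Jackson kernel. Let $J_n$ be a one-dimensional Jackson kernel on $\mathbb{T}$: a non-negative trigonometric polynomial of degree at most $n$, normalized by $\int_{-\pi}^{\pi}J_n(t)\,dt=1$ and satisfying the moment bounds $\int_{-\pi}^{\pi}|t|^kJ_n(t)\,dt\leq C_k/n^k$ for $k=0,1,\ldots,m$. Setting $n=\lfloor d/s\rfloor$ and forming the tensor kernel $\widetilde J_n=J_n\otimes\cdots\otimes J_n$ on $\mathbb{T}^s$, define
\[
P_d(\zeta)\,=\,\int_{\mathbb{T}^s}f\left(\zeta e^{-iT}\right)\widetilde J_n(T)\,dT,
\]
extended to $\overline{\D}^s$ by projecting onto the nonnegative Fourier modes, which is automatic since $f$ is holomorphic. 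The resulting $P_d$ is a holomorphic polynomial of total degree at most $sn\leq d$. Writing the error as $f(\zeta)-P_d(\zeta)=\int_{\mathbb{T}^s}\left(f(\zeta)-f\left(\zeta e^{-iT}\right)\right)\widetilde J_n(T)\,dT$ and expanding $f\left(\zeta e^{-iT}\right)$ via a Taylor formula of order $m$ in $T$ with remainder governed by the $Lip_\gamma$ condition on the $m$-th derivatives, one collects the kernel moments of order $m$ (contributing $C_m/n^m$) times a factor $|T|^{\gamma}$ (contributing the remaining $n^{-\gamma}$), giving a total bound $\|f-P_d\|_{\infty}\leq L\,C_{m,\gamma,s}/n^{m+\gamma}\leq L\,C_{m,\gamma}'/d^{m+\gamma}$.

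The main obstacle will be the Taylor-remainder estimate: one must organise the expansion of $f\left(\zeta e^{-iT}\right)$ in the $s$ angular directions so that exactly $m$ derivatives plus a Lipschitz-$\gamma$ modulus of continuity appear, and so that the kernel moments of order $m+\gamma$ can be harvested cleanly from the one-variable Jackson bounds on $J_n$. If that multivariate bookkeeping becomes awkward, a cleaner fallback is to iterate the one-dimensional Jackson theorem $s$ times: first approximate $f$ uniformly in $z_2,\ldots,z_s$ by a polynomial of degree $\lfloor d/s\rfloor$ in $z_1$, then repeat in the remaining variables, absorbing the multiplicative factor $s^{m+\gamma}$ into the constant $C_{m,\gamma}$.
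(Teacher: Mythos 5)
The first thing to say is that the paper does not prove this statement at all: it is imported verbatim as Theorem~7.2 of Ren and Wang, \emph{Holomorphic Jackson's theorems in polydiscs} (reference \cite{renwang} in the bibliography), and is used as a black box in the proof of Corollary~\ref{appljackson}. So there is no internal proof to compare yours against; what can be judged is whether your sketch would actually establish the cited result.

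Your overall strategy --- convolve $f|_{\mathbb{T}^s}$ with a tensor product of one-dimensional Jackson kernels, observe that holomorphy kills the negative Fourier modes so the output is a polynomial of total degree at most $sn\leq d$, and harvest the rate $n^{-(m+\gamma)}$ from the kernel moments --- is the standard route and is essentially sound as a skeleton. The genuine gap is in the hypothesis you are allowed to use. The class $Lip_{\gamma}^m\left(\overline{\D}^s\right)$, as defined in this paper (and in Ren--Wang), only controls the \emph{diagonal} rotation: $\left(\partial^{\alpha}f/\partial z^{\alpha}\right)\left(e^{ih}\zeta\right)-\left(\partial^{\alpha}f/\partial z^{\alpha}\right)(\zeta)=O\left(|h|^{\gamma}\right)$ with a single real $h$ multiplying all $s$ coordinates at once. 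Your error representation $f(\zeta)-P_d(\zeta)=\int_{\mathbb{T}^s}\left(f(\zeta)-f\left(\zeta e^{-iT}\right)\right)\widetilde{J}_n(T)\,dT$ requires estimating increments $f\left(\zeta e^{-iT}\right)-f(\zeta)$ for an arbitrary vector $T=(t_1,\ldots,t_s)$, i.e.\ a coordinate-wise modulus of smoothness; the same objection applies to your fallback of iterating the one-variable Jackson theorem coordinate by coordinate. Passing from the diagonal Lipschitz condition to coordinate-wise (or radial) control is not free: it is a Hardy--Littlewood-type statement for holomorphic functions on the polydisc, and proving it is a substantial part of what Ren and Wang actually do before their Theorem~7.2. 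Your proposal treats the hypothesis as if it were the full tensor H\"older class, so as written it proves a theorem with a stronger hypothesis than the one stated. Secondary, fixable issues: the moment bounds $\int|t|^kJ_n(t)\,dt\leq C_k/n^k$ for $k$ up to $m$ require a Jackson kernel of order depending on $m$ (a power of the Fej\'er kernel chosen accordingly), and the order-$m$ Taylor expansion in the $s$ angular directions with a $Lip_{\gamma}$ remainder needs the lower-order derivative terms to be killed by symmetry or absorbed into the polynomial, which your sketch does not spell out.
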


We can then give the proof of Corollary~\ref{appljackson}.

\begin{proof}

Let fix $s=2$, $m\geq3$, $0<\gamma\leq1$ and 
$f\in Lip_{\gamma}^m\left(\overline{\D}^2\right)$.
By Theorem~\ref{thm72renwang}, there is a positive constant
$M>0$ such that for all $d\geq1$, there exists $P_d\in\C_d[z,w]$ that satisfies
\begin{eqnarray}\label{thm72renwang2}
\sup_{(z,w)\in\overline{\D}^2}
\left|
f(z,w)-P_d(z,w)
\right|
& \leq &
\frac{M}{d^{m+\gamma}}
\,.
\end{eqnarray}

On the other hand, let $\left(\eta_i\right)_{i\geq0}$ and $\left(\theta_j\right)_{j\geq0}$
be Leja sequences for the unit disk (with $\left|\eta_0\right|=\left|\theta_0\right|=1$),
and let us consider the intertwining sequence $\left(H_n\right)_{n\geq1}$ defined
as in~(\ref{defintertwin}). For all
$N\geq1$ with 
$N_d<N\leq N_{d+1}$ (see~(\ref{Nn})),
let us consider $\Omega_N$ defined as 
in~(\ref{defOmegaN2}).

Now let $L_{\Omega_N}[f]$ be the Lagrange polynomial of $f$
defined by~(\ref{deflagpold}).
We claim that $L_{\Omega_N}\left[P_d\right]=P_d$. Indeed, 
$L_{\Omega_N}\left[P_d\right]\in\mathcal{P}_N$
and $P_d\in\C_d[z,w]\subset\mathcal{P}_N$.
Moreover, $L_{\Omega_N}\left[P_d\right]$ and $P_d$ coincide on
$\Omega_N$ that is unisolvent for $\mathcal{P}_N$.
This proves the claim.

Thus a classical calculation yields for all $(z,w)\in\overline{\D}^2$,
\begin{eqnarray}\nonumber
\left|
f(z,w)-L_{\Omega_N}[f](z,w)
\right|
& \leq &
\left|
f(z,w)-P_d(z,w)
\right|
+
\left|
P_d(z,w)-L_{\Omega_N}[f](z,w)
\right|
\\\label{appljacksonaux1}
& \leq &
\frac{M}{d^{m+\gamma}}
+
\left|
L_{\Omega_N}\left[P_d-f\right](z,w)
\right|
\,,
\end{eqnarray}
the second inequality being an application of~(\ref{thm72renwang2}).
On the other hand, since the Lebesgue constant is also the operator norm
of the linear operator $L_{\Omega_N}$ that is
the projection from $C\left(\overline{\D}^2\right)$ onto $\mathcal{P}_N$,
it follows that
\begin{eqnarray*}
\left|
L_{\Omega_N}\left[P_d-f\right](z,w)
\right|
& \leq &
\Lambda_N\left(\overline{\D}^2,\Omega_N\right)
\sup_{(z',w')\in\overline{\D}^2}
\left|
P_d\left(z',w'\right)-f\left(z',w'\right)
\right|
\\
& \leq &
\Lambda_N\left(\overline{\D}^2,\Omega_N\right)
\frac{M}{d^{m+\gamma}}
\,,
\end{eqnarray*}
the last estimate being justified by~(\ref{thm72renwang2}).
We deduce by~(\ref{appljacksonaux1}) that
\begin{eqnarray}\nonumber
\left|
f(z,w)-L_{\Omega_N}[f](z,w)
\right|
& \leq &
\left(
1+
\Lambda_N\left(\overline{\D}^2,\Omega_N\right)
\right)
\frac{M}{d^{m+\gamma}}
\,.
\end{eqnarray}
Finally, an application of~(\ref{lebcteintertwinbidisc}) from Theorem~\ref{lebcteintertwin}
leads to (since $N\sim d^2/2$ by~(\ref{Nn}) and~(\ref{defnm}))
\begin{eqnarray*}
\left|
f(z,w)-L_{\Omega_N}[f](z,w)
\right|
\;=\;
\left(1+
O\left(N^{3/2}\right)
\right)
\times
O\left(\frac{1}{\left(\sqrt{N}\right)^{m+\gamma}}\right)
\;=\;
O\left(\frac{1}{N^{(m+\gamma-3)/2}}\right).
\end{eqnarray*}

\end{proof}


\begin{thebibliography}{00}








\bibitem{bermanboucksomwitt2011} R. Berman, S. Boucksom, D. Witt Nystr\"{o}m,
Fekete points and convergence towards equilibrium measures
on complex manifolds,
{\em Acta Math.} {\bf 207} (1) (2011), 1--27.



\bibitem{bialascalvi} L. Bialas-Ciez, J.-P. Calvi,
Pseudo Leja sequences,
{\em Annali di Matematica Pura ed Applicata} {\bf 191} (2012), 53--75.



\bibitem{bierman1903}  O. Bierman, 
\"{U}ber n\"aherungsweise kubaturen, 
{\em Monaths. Math. Phys.} {\bf 14} (1903), 211--225. 



\bibitem{bloomboschrislev1992} T. Bloom, L. Bos, C. Christensen, N. Levenberg,
Polynomial interpolation of holomorphic functions in $\C$ and $\C^n$,
{\em Rocky Mountain J. Math.} {\bf 22} (2) (1992), 441--470.



\bibitem{bloomboscalvilev2012} T. Bloom, L. Bos, J.-P. Calvi, N. Levenberg, 
Polynomial interpolation and approximation in $\C^d$,
{\em  Annales Polonici Mathematici} {\bf 106} (2012), 53--81.



\bibitem{calvi2005} J.-P. Calvi,
Intertwining unisolvent arrays for multivariate Lagrange interpolation, 
{\em Advances in Computational Mathematics} {\bf 23} (2005), 393--414.



\bibitem{calviphung1} J.-P. Calvi, V.M. Phung,
On the Lebesgue constant of Leja sequences for the unit disk and its applications
to multivariate interpolation,
{\em Journal of Approximation Theory} {\bf 163} (5) (2011), 608--622.






\bibitem{edrei1} A. Edrei, Sur les d\' eterminants r\' ecurrents et les
singularit\' es d'une fonction donn\' ee par son d\' eveloppement de Taylor,
{\em Compos. Math.} {\bf 7} (1940) 20--88.



\bibitem{fekete1} M. Fekete, \"{U}ber die Verteilung der Wurzeln
bei gewissen algebraischen Gleichungen mit ganzzahligen
Koeffizienten, {\em Math. Zeitschrift} {\bf 17} (1923)
p. 228--249.






\bibitem{irigoyen5} A. Irigoyen,
A uniform bound for the Lagrange polynomials of Leja points for the unit disk,
{\em Annales Polonici Mathematici}, {\bf 119} (2017), 23--47.



\bibitem{jackson} D. Jackson, 
On approximations by trigonometrical sums and polynomials,
{\em Trans. Amer. Math. Soc.} {\bf 13} (1912), 491--515. 



\bibitem{klimek} M. Klimek, 
{\em Pluripotential Theory}, London Mathematical Society Monographs, 
New Series~\#6, Oxford University Press, 1991.






\bibitem{leja1} F. Leja, Sur certaines suites li\' ees aux ensembles plans
et leur application \`a la repr\' esentation conforme,
{\em  Annales Polonici Mathematici} {\bf 4} (1957) 8--13.




\bibitem{myriam} M. Ouna\"ies,
{\em A sharp bound on the Lebesgue constant for Leja points in the unit disk},
J. Approx. Theory {213} (2017), 70--77.




\bibitem{renwang} G. Ren, M. Wang, 
Holomorphic Jackson's theorems in polydiscs, 
{\em Journal of Approximation Theory} {\bf 134} (2) (2005), 175--198.



\bibitem{safftotik} E. Saff, V. Totik, 
{\em Logarithmic Potentials with External Fields}, Grundlehren Math. Wiss. 
[Fundamental Principles of Mathematical Sciences] {\bf 316} (1997). 



\bibitem{sauerxu1995} T. Sauer, Y. Xu, 
On multivariate Lagrange interpolation, 
{\em Math. Comp.} {\bf 64} (1995), 1147--1170.



\bibitem{schiffersiciak1962} M. Schiffer, J. Siciak, 
Transfinite diameter and analytic continuation of functions of two complex variables,
{\em Studies in Math. Analysis and Related Topics}, Stanford (1962), 341--358.






\bibitem{zaharjuta1975} V.P. Zaharjuta,
Transfinite diameter, Chebyshev constants, and capacity for
compacta in $\C^n$,
{\em Math. USSR Sbornik}, {\bf 25} (3) (1975), 350--364. 




\end{thebibliography}
\end{document}